\newcommand{\D}{\mathrm{d}}
\newcommand{\I}{\mathrm{i}}
\newcommand{\E}{\mathrm{e}}
\newcommand{\eps}{\varepsilon}
\newcommand{\T}{\intercal}
\newcommand*\diff{\mathop{}\!\D}
\newcommand{\abs}[1]{\lvert#1\rvert} 
\newcommand{\Abs}[1]{\left\lvert#1\right\rvert} 
\newcommand{\norm}[1]{\lVert#1\rVert}
\DeclareMathAlphabet{\mathsfit}{\encodingdefault}{\sfdefault}{m}{sl}
\SetMathAlphabet{\mathsfit}{bold}{\encodingdefault}{\sfdefault}{bx}{n}
\newcommand{\sone}{\mathbbm{1}}
\newcommand{\sN}{\mathbb{N}}
\newcommand{\sR}{\mathbb{R}}
\newcommand{\sZ}{\mathbb{Z}}
\newcommand{\fS}{\mathcal{S}}
\newtheorem{thm}{Theorem}
\newtheorem{prop}{Proposition}
\newtheorem{rmk}{Remark}
\newtheorem{assump}{Assumption}
\title{Bunching instability and asymptotic properties in epitaxial growth with elasticity effects: continuum model
}
\author{
    Tao Luo
    \footnotemark[1]\and
    Yang Xiang
    \footnotemark[2]\and
    Nung Kwan Yip
    \footnotemark[3]
}
\date{Received: date / Accepted: date}
\begin{document}
\maketitle
\allowdisplaybreaks
\noindent
\begin{abstract}
    We study the continuum epitaxial model for elastic interacting atomic steps on vicinal surfaces proposed by Xiang and E \cite{Xiang2002Derivation,Xiang2004Misfit}. The non-local term and the singularity complicate the analysis of its PDE. In this paper, we first generalize this model to the Lennard--Jones $(m,n)$ interaction between steps. Based on several important formulations of the non-local energy, we prove the existence, symmetry, unimodality, and regularity of the energy minimizer in the periodic setting. In particular, the symmetry and unimodality of the minimizer implies that it has a bunching profile. Furthermore, we derive the minimum energy scaling law for the original continnum model. All results are consistent with the corresponding results proved for discrete models by Luo et al. \cite{Luo2016Energy,Luo2021Energy}.
\end{abstract}
\noindent
{\bf Keywords:} 
step bunching, epitaxial growth, energy scaling law, Lennard--Jones interaction, asymptotic analysis\\
\noindent
{\bf Mathematics Subject Classification:} 74G45, 74G65, 45K05, 41A58, 49K99

\newpage
\section{Introduction}

In epitaxial film growth, various surface morphological instabilities may occur due to elastic effects caused by surface defects such as steps as well as the mismatch between the lattice constants of the substrate and the film. Among these instabilities, the step bunching instability attracts materials scientists' interest for its potential to facilitate the fabrication of nanostructures on the vicinal surfaces. Below the roughening transition temperature, a vicinal surface consists of a succession of terraces and atomic height steps, while the angle between this surface and the crystallographic plane is as small as a few tenths of one degree. Due to elastic interactions, the steps are attracted to each other and coalesce into step bunches whose size increase gradually in the evolution \cite{Tersoff1995Step, Duport1995Growth}. 

Based on the Burton, Cabrera, and Frank (BCF) theory \cite{Burton1951growth} which assumes the diffusion of adatoms on the terraces until their attachment to the steps, 
Tersoff et al. \cite{Tersoff1995Step,Liu1998Self} proposed a discrete model step dynamics on the vicinal surfaces, including two elastic effects of steps: the attractive one as a force monopole $\sum_{j\neq i}\frac{1}{x_j-x_i}$ and the repulsive one as a force dipole $\sum_{j\neq i}\frac{1}{(x_j-x_i)^3}$. More precisely, their model reads as
\begin{align}
    \frac{\D x_i}{\D t}
    &= a^2F_{\mathrm{ad}}\frac{x_{i+1}-x_{i-1}}{2}+
    \frac{a^2\rho_0D }{k_BT}\left(\frac{\mu_{i+1}-\mu_i}{x_{i+1}-x_i}-\frac{\mu_i-\mu_{i-1}}{x_i-x_{i-1}}\right),\quad i\in\sZ,\label{eq..Tersoff02}\\
    \mu_i
    &= \sum_{j\neq i}\left[\frac{\alpha_1}{x_j-x_i}-\frac{\alpha_2}{(x_j-x_i)^3}\right],\quad i\in\sZ.\label{eq..f_i.02}
\end{align}
where $x_i$ is the position of the $i$th step, $a$ is the lattice constant, $F_\mathrm{ad}$ is adatom flux, $\alpha_1$ ($\alpha_2$) is the strength of monopole (dipole)  interaction, and $\rho_0$, $D$, $k_B$, and $T$ are the equilibrium adatom density on a step without elastic interactions, the diffusion constant on the terrace, Boltzmann constant and temperature, respectively. 
Duport et al. \cite{Duport1995New,Duport1995Growth} also studied a related model including the elastic interaction between the adatoms and steps and the Schweobel barriers. Alhough Tersoff's discrete model performs well numerically, it is mathematically interesting to see how such a model leads to step-bunching in a highly non-linear regime. This was first proved by Luo et al. in \cite{Luo2016Energy} and then generalized to step-like systems \cite{Luo2021Energy} with step interacting under Lennard--Jones type potential. In particular, the authors formulated the problem into a discrete variational problem and showed that the minimizer has a bunching structure, i.e., steps tend to concentrate into a narrow band. Several scaling laws for the minimum energy as well as the minimal distance between adjacent steps was also provided. 

By taking asymptotic expansion and keeping the first two orders of Tersoff's discrete model \cite{Tersoff1995Step,Duport1995Growth}, Xiang and E \cite{Xiang2002Derivation, Xiang2004Misfit} derived a continuum model for epitaxial growth with elasticity which reads as
\begin{equation}
    h_t=\frac{a^2\rho_\mathrm{ad} D \pi\alpha_1}{k_B T}\partial_{xx}\left\{
    -H(h_x)-\eta\left(\frac{1}{h_x}+\gamma h_x\right)h_{xx}
    \right\},\quad x\in \sR,
\end{equation}
where $h$ is the height of the vicinal surface, two physical parameters are $\eta=\frac{a}{2\pi}$ and $\gamma=\frac{\pi^2 l_\mathrm{eq}^2}{a^2}=\frac{\alpha_2 \pi^2}{\alpha_1 a^2}$, the equilibrium distance is $l_\mathrm{eq}=\sqrt{\frac{\alpha_2}{\alpha_1}}$, and the Hilbert transform $H$ is defined as follows
\begin{equation}
  H(f)(x)=\frac{1}{\pi}\mathrm{P.V.}\int_{-\infty}^{\infty}\frac{f(y)}{x-y}\diff{y}.
\end{equation}
For the coherence of our presentation, we take no flux assumption that $F_\mathrm{ad}=0$ and the convention that the vicinal surface is monotonically increasing: $h_x\geq 0$. Note that this monotonicity assumption is opposite to \cite{Tersoff1995Step} and \cite{Xiang2002Derivation}, where they all assumed the decreasing surface profile.
Later, Xu and Xiang further extended this to 2+1 dimensions \cite{Xu2009Derivation} (See also \cite{Zhu2009Continuum}). These continuum models capture the step-bunching phenomenon in terms of the linear instability and numerical simulation. 

For the analysis of the 1+1 dimensional continnum model by Xiang \cite{Xiang2002Derivation}, the existence and regularity of weak solution was studied by Dal Maso et al. \cite{DalMaso2014Analytical}, Fonseca et al. \cite{Fonseca2015Regularity} and very recently improved by Gao et al. \cite{Gao2020regularity}. 
For the connection between the discrete and the continuum models, assuming all the contributin tersm are of the same length scale, Gao et al. \cite{Gao2017continuum} proved the first order convergence rate of a modified discrete model to the strong solution of the limiting PDE. 
However, due to the non-local integral, sigularity, and high order derivatives in the model, the mathematical understanding of this continuum model is far from complete. In particular, the characterizations of the bunching structure, such as the unimodal profile and its energy scaling law, are not known yet. 

Let us also mention some related analysis works on continuum epitaxial growth model without step bunching phenomenon. Al Hajj Shehadeh et al. \cite{AlHajjShehadeh2011evolution} considered both discrete and continuum models in the attachment-detachment-limited regime including only the force dipole interaction between nearest steps, and hence there is no step bunching phenomenon. Li and Liu \cite{Li2003Thin} investigated epitaxial growth with or without slope selection systematically and obtained the well-posedness, perturbation analysis, as well as numerical simulation. 

In this paper, we are going to study the appearance of step bunching from an energetic point of view. We first follow Xiang's derivation and obtain a continuum model under Lennard--Jones $(m,n)$ interaction. We show rigorously that the chemical potential of this continuum is the leading order of its discrete counterpart. Next, for this generalized model, we establish the existence, symmetry, unimodality, and regularity of its energy minimizer in the periodic setting. The unimodality and symmetry of the minimizer implies that it has a bunching-like profile. Then we further prove for the mininmizer a non-trivial energy scaling law. All results are consistent with to those proved in \cite{Luo2021Energy} for general Lennard--Jones interaction on a discrete system.


\section{Main results}
We study a generalized continuum model for epitaxial growth on the vicinal surfaces with steps interacting with the Lennard--Jones $(m,n)$ potential
\begin{equation}
  V(x):=\left\{
  \begin{array}{ll}
      -\frac{\alpha_1}{m}|x|^{-m}+\frac{\alpha_2}{n}|x|^{-n}, & -1<m<1<n,\, m\neq 0, \\
      \alpha_1\log|x|+\frac{\alpha_2}{n}|x|^{-n}, & m=0,1<n,
  \end{array}
  \right.\label{eq..LennardJones}
\end{equation}
for $-1<m<1<n$. Its derivative for $x>0$ reads as
$V'(x)=\alpha_1 x^{-m-1}-\alpha_2 x^{-n-1}$.
Then the equilibrium distance between two successive steps $l_\mathrm{e}$, satisfying $V'(l_\mathrm{e})=0$, i.e.,
$l_\mathrm{e}=(\alpha_2/\alpha_1)^{\frac{1}{n-m}}$.
The physically reasonable scale for $l_\mathrm{e}$ is of order $a$, the lattice constant. For $-1<m<1<n$, the generalized epitaxial growth dynamics of the steps $\{x_i\}_{i\in\sZ}$ (with the convention $\cdots<x_i<x_{i+1}<\cdots$) reads as
\begin{align}
    \frac{\D x_i}{\D t}
    &= a^2F_{\mathrm{ad}}\frac{x_{i+1}-x_{i-1}}{2}+
    \frac{a^2\rho_0D }{k_BT}\left(\frac{\mu_{i+1}-\mu_i}{x_{i+1}-x_i}-\frac{\mu_i-\mu_{i-1}}{x_i-x_{i-1}}\right),\quad i\in\sZ,\label{eq..Tersoff}\\
    \mu_i
    &= \alpha_1\sum_{k=1}^{\infty}\left[(x_{i+k}-x_i)^{-m-1}-(x_i-x_{i-k})^{-m-1}\right]\label{eq..f_i.mn}\\
    &~~~~+\alpha_2\sum_{k=1}^{\infty}\left[(x_{i+k}-x_i)^{-n-1}-(x_i-x_{i-k})^{-n-1}\right].\nonumber
\end{align}
The convention $\cdots<x_i<x_{i+1}<\cdots$ leads to the height function $h(x)$ being monotonically increasing in the continuum model.

\textbf{(i) Derivation of the generalized continuum model.}
We first derive the continuum model corresponding to the above generalized discrete model. At equilibrium, the discrete system reads as
\begin{equation*}
    \mu_i=0,\quad i\in\sZ.
\end{equation*}
From now on, we rescale it into the chemical potential per unit length, i.e.,
\begin{equation}
    \mu_i^{\mathrm{a}}
    :=\sigma_i^{(m)}-\frac{\alpha_2}{\alpha_1}\sigma_i^{(n)}=a\sum\limits_{j\in\sZ, j\neq i}\left(\frac{x_j-x_i}{\abs{x_j-x_i}^{m+2}}-\frac{\alpha_2}{\alpha_1}\frac{x_j-x_i}{\abs{x_j-x_i}^{n+2}}\right).
\end{equation}
This is the atomistic chemical potential, later we will show that it converges to the continuum chemical potential under some assumptions.
Here for $s>-1$, we define
\begin{align}
    \sigma_i^{(s)}
    := a\sum_{j\in\sZ,j\neq i}\frac{x_j-x_i}{\abs{x_j-x_i}^{s+2}}
    =a\sum_{k=1}^\infty G_s(ka;ia),\label{eq..sigma.definition}
\end{align}
where for any strictly monotonically increasing function $x(h)$, we define the function
\begin{equation}
    G_s(h;\xi):=(x(\xi+h)-x(\xi))^{-s-1}-(x(\xi)-x(\xi-h))^{-s-1}.\label{eq..G(h).definition}
\end{equation}
The Taylor theorem and dominated convergence theorem is sufficient to derive a continuum limit mimicking the behavior of $\sigma^{(n)}$ for $n>1$. However, it is more subtle to derive the continuum counterpart for $-1<m<1$ because of the singularity and the non-local effects. We will use the Euler--Maclaurin expansion for singular integrals. For a reference on this, see Appendix \ref{sec..EulerMaclaurin}.

We show that for any sufficiently smooth surface $h(x)$ (see Theoerm \ref{thm..consistency}) the atomistic chemical potential of this model converges to its continuum counterpart:
\begin{equation}
    \mu(x)=-\mathrm{P.V.}\int_{-\infty}^{+\infty}\frac{(x-y)h_x(y)}{\abs{x-y}^{m+2}}\diff{y}-\eps^{1-m}\left(\frac{1}{h_x^{1-m}}+\gamma h_x^{n-1}\right)h_{xx}.
\end{equation}
Consequently, the governing equation of the continuum model (with some physical constants dropped) reads as:
\begin{equation}
    h_t=\mu_{xx}
    =\left[-\mathrm{P.V.}\int_{-\infty}^{+\infty}\frac{(x-y)h_x(y)}{\abs{x-y}^{m+2}}\diff{y}-\eps^{1-m} \left(\frac{1}{h_x^{1-m}}+\gamma h_x^{n-1}\right)h_{xx}\right]_{xx}.\label{eq..general.PDE}
\end{equation}
Here and in the rest of this paper ``$\mathrm{P.V.}$'' means the integral is defined by the principle value. For some infinite series which are not absolutely convergent, its summation $\sum_{k\in\sZ}$ should also be understood in the sense of the principle value $\lim_{N\to\infty}\sum_{\abs{k}\leq N}$.
In the derivation of the continuum model, we assume that $\gamma=O(1)$ and $\eps\to0$ for $-1<m<1<n$, where
\begin{align}
    \gamma
    &:= \frac{\alpha_2 a^m}{\alpha_1 a^n}\frac{(n+1)\zeta(n)}{(m+1)\abs{\zeta(m)}},\\
    \eps^{1-m}
    &:= (m+1)\abs{\zeta(m)} a^{1-m}.
\end{align}
The zeta function $\zeta(s)$ is a function of a complex variable s that analytically continues the sum of the Dirichlet series $\zeta(s)=\sum_{k=1}^\infty k^{-s}$, which converges when $\mathrm{Re}(s)>1$.
We remark that $\zeta(m)<0$ for $-1<m<1$ and $\zeta(n)>0$ for $1<n$. Hence $\gamma$ and $\eps$ are positive. We will always suppose that $\gamma$ is independent of $a$ (or $\eps$).
Then we have the following result under a technical assumption (See Assumption \ref{assump..regularity.vicinal.surface}).
\begin{thm}[consistency of atomistic and continuum chemical potentials]\label{thm..consistency}
    Suppose that Assumption \ref{assump..regularity.vicinal.surface} holds. If $-1<m<1<n$, then
    \begin{equation}
        \lim_{\eps\to 0}\frac{\mu^\mathrm{a}-\mu}{\eps^{1-m}}=0.
    \end{equation}
\end{thm}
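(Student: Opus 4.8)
The plan is to treat the two sums $\sigma^{(n)}$ and $\sigma^{(m)}$ separately, since the exponents $n>1$ and $-1<m<1$ require different tools, and to recombine using the identities $\eps^{1-m}\gamma=\frac{\alpha_2}{\alpha_1}a^{1-n}(n+1)\zeta(n)$ and $\eps^{1-m}=(m+1)\abs{\zeta(m)}a^{1-m}$, which are immediate from the definitions of $\gamma$ and $\eps$. I would work at a fixed height $\xi=ia$ with position $X=x(\xi)$, record $x'(\xi)=1/h_x$ and $x''(\xi)=-h_{xx}/h_x^3$ so that $\frac{x''}{(x')^{s+2}}=-h_{xx}h_x^{s-1}$, and invoke Assumption \ref{assump..regularity.vicinal.surface} to ensure $x$ is smooth with $h_x$ bounded away from $0$ and $\infty$, giving uniform control of all remainders below.

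For the short-range term $\sigma^{(n)}$ with $n>1$ there is no endpoint obstruction. Taylor expanding $x(\xi\pm ka)$ and using that $G_n$ is antisymmetric in $\pm h$ (only odd powers survive the difference) gives the pointwise limit $a^nG_n(ka;\xi)\to(n+1)h_{xx}h_x^{n-1}k^{-n}$ as $\eps\to0$. The monotonicity together with the lower bound on $h_x$ supplies a uniform dominating bound $a^n\abs{G_n(ka;\xi)}\le Ck^{-n}$, summable because $n>1$, so dominated convergence yields $\sigma_i^{(n)}=a^{1-n}(n+1)\zeta(n)\,h_{xx}h_x^{n-1}+o(a^{1-n})$. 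Multiplying by $-\frac{\alpha_2}{\alpha_1}=O(a^{n-m})$ converts this into $-\eps^{1-m}\gamma\,h_x^{n-1}h_{xx}+o(\eps^{1-m})$, which is the second local term of $\mu$.

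The delicate term is the long-range $\sigma^{(m)}$ with $-1<m<1$, where $\sum_k k^{-m}$ diverges and $G_m(h;\xi)\sim(m+1)h_{xx}h_x^{m-1}\,h^{-m}$ carries an algebraic endpoint singularity at $h=0$. First I would identify the continuum integral: changing variables from height to position $s=x(y)$ transforms $\int_0^\infty G_m(h;\xi)\diff h$ into the nonlocal principal-value term $-\mathrm{P.V.}\int_{-\infty}^{\infty}\frac{(X-s)h_x(s)}{\abs{X-s}^{m+2}}\diff s$, the principal value being forced because the one-sided integrals diverge individually while the antisymmetric combination converges. Then I would apply the Euler--Maclaurin expansion for singular integrands (Appendix \ref{sec..EulerMaclaurin}) to $a\sum_{k\ge1}G_m(ka;\xi)$: its leading contribution is $\int_0^\infty G_m(h;\xi)\diff h$, and the first correction, governed by the coefficient $(m+1)h_{xx}h_x^{m-1}$ of the $h^{-m}$ singularity together with the factor $\zeta(m)a^{1-m}$, equals $\zeta(m)(m+1)h_{xx}h_x^{m-1}a^{1-m}=-\eps^{1-m}\frac{1}{h_x^{1-m}}h_{xx}$, using $\zeta(m)<0$. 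This is precisely the first local term of $\mu$, while the remaining corrections, carried by $\zeta(m-2)a^{3-m}$ and higher, are $o(\eps^{1-m})$.

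The main obstacle is the rigorous justification and error control of this singular Euler--Maclaurin step. I would split the sum into a near field (small $k$), where the $h^{-m}$ singularity and its subleading even-power expansion dominate and the generalized Euler--Maclaurin formula supplies the $\zeta(m)a^{1-m}$ correction, and a far field (large $k$), where the cancellation in the difference $G_m$ produces $O(h^{-m-2})$ decay; the regularity of $x$ provides the uniform bounds on the Taylor remainders needed to make both contributions $o(\eps^{1-m})$ after division by $\eps^{1-m}$. One must also justify interchanging the principal value with the limit $\eps\to0$ and check that the Riemann-sum-to-integral matching is uniform in $\xi$. Collecting the two terms gives $\mu^{\mathrm{a}}-\mu=o(\eps^{1-m})$, which is the assertion.
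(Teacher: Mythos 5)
Your proposal is correct and follows essentially the same route as the paper: the separate treatment of $\sigma^{(n)}$ (Taylor expansion plus dominated convergence) and $\sigma^{(m)}$ (the singular Euler--Maclaurin expansion, with the even-power expansion of $h^{m}G_m(h;\xi)$ making the $a^{2-m}$ term vanish), the change of variables identifying $\int_0^\infty G_m(h;\xi)\diff{h}$ with the principal-value integral, and the recombination via the definitions of $\eps$ and $\gamma$ are precisely the content of Propositions \ref{prop..asymptotic.expansion.chemical.potential} and \ref{prop..change.of.variable} and the paper's proof of Theorem \ref{thm..consistency}. Even your near-field/far-field error control corresponds to the paper's splitting of the Euler--Maclaurin remainder $R_2$ into the ranges $[0,1]$ and $[1,\infty)$.
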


\textbf{(ii) Existence, symmetry, and regularity of the step-bunching profile.}
Next, we study the continuum model in a periodic setting. Without loss of generality, we denote the periodic domain as $\Omega:=[-\frac{1}{2},\frac{1}{2}]$. Suppose that the average step density (or slope) is a given constant $A>0$. For any height function $h(x)$ defined on $\sR$, we write $\tilde{h}(x)=h(x)-Ax$ for height deviation from the reference state and $\rho(x)=h_x(x)$ for step density.
For $k\in\sN$, we denote $W^{k,p}_\#(\Omega)$ the Sobolev space of functions whose weak derivatives of order less than $k$ are $1$-periodic and in the space $L^p(\Omega)$. In particular, we also write $H^{k}_\#(\Omega)=W^{k,2}_\#(\Omega)$.
Define the Hilbert space
\begin{equation}
    V:=\left\{\tilde{h}\in H^1_\#(\Omega)\mid \int_{\Omega}\tilde{h}(x)\diff{x}=0\right\}.
\end{equation}
Then we define the solution space
\begin{equation}
    X
    := \left\{h\in H^1_\mathrm{loc}(\sR)\mid\tilde{h}(x)=h(x)-A x\in V,\,\, h_x(x)\geq 0, \,\,\text{a.e.}\,\, x\in\sR\right\}.
\end{equation}
It is easy to see that $X$ is a convex closed subset of $V+\{Ax\}$.
Given a function $h\in X$ and $-1<m<1<n$, we define its energy funcitonal for the continuum model 
\begin{equation}
    E[h]:=-I_m[h]+\eps^{1-m}\int_{\Omega} \Phi_{m,n}(h_x)\diff{x},\label{eq..continuum.energy.definition}
\end{equation}
where the non-local energy and local energy density read as
\begin{align}
    I_m[h]
    &:= \frac{1}{2}\int_{\Omega}h(x)\mathrm{P.V.}\int_{\sR}\frac{(x-y)\tilde{h}_x(y)}{\abs{x-y}^{m+2}}\diff{y}\diff{x},\label{eq..I_m[h].definition}\\
    \Phi_{m,n}(\xi)
    &:= \left\{
    \begin{array}{ll}
        \frac{1}{m(m+1)}\xi^{m+1}+\frac{\gamma}{n(n+1)}\xi^{n+1}, & -1<m<1<n, m\neq 0, \xi>0\\
        \xi\log \xi+\frac{\gamma}{n(n+1)}\xi^{n+1}, & m=0, 1<n, \xi>0,\\
        0, & -1<m<1<n, \xi=0,\\
        +\infty, & -1<m<1<n, \xi<0,
    \end{array}
    \right.\label{eq..Phi_m,n.definition}
\end{align}
respectively. 
If $\xi>0$, then 
\begin{equation}\label{eq..derivativePhi}
    \Phi_{m,n}'(\xi)
    = \left\{
    \begin{array}{ll}
        \frac{1}{m}\xi^{m}+\frac{\gamma}{n}\xi^{n}, & -1<m<1<n, m\neq 0,\\
        \log \xi+1+\frac{\gamma}{n}\xi^{n}, & m=0, 1<n.
    \end{array}
    \right.
\end{equation}
By direct calculation, one can see that
\begin{equation}
	\frac{\delta E}{\delta h}(x)
	=-\mathrm{P.V.}\int_{-\infty}^{+\infty}\frac{(x-y)\tilde{h}_x(y)}{\abs{x-y}^{m+2}}\diff{y}-\eps^{1-m}\left(\frac{1}{h_x^{1-m}(x)}+\gamma h_x^{n-1}(x)\right)h_{xx}(x)
	=\mu(x),
\end{equation} i.e, the continuum chemical potential is the first variation of the total energy.
\begin{rmk}
    In particular, if $(m,n)=(0,2)$, then $\mu(x)=-\pi H(h_x)-\eps(\frac{1}{h_x}+\gamma h_x)h_{xx}$. The negative sign in $O(\eps)$ term is due to our convention $h_x\geq 0$ which is opposite to the monotonically decreasing assumption used in the previous works, e.g., \cite{Tersoff1995Step, Xiang2002Derivation}. (See also Remark \ref{rmk..fractional.Laplacian}.)
\end{rmk}

We show that the total energy can be represented in a kernel-based convolution, which is essential in the estimate of the step density $\rho=h_x$ (see Proposition \ref{prop..kernel-basedRepresentation}):
\begin{equation}
    E[h]=-\frac{1}{2}\int_{\Omega}\int_{\Omega} \rho(x)\rho(y)K_m(x-y)\diff{x}\diff{y}+\eps^{1-m}\int_{\Omega} \Phi_{m,n}(\rho(x))\diff{x},
\end{equation}
where the kernel $K_m(\cdot)$ is $L^1(\Omega)$, non-negative, $1$-periodic, even, and strictly decreasing on $(0,\frac{1}{2})$.

Let us briefly introduce the notion of rearrangement. 
For any measurable set $S\subset\sR$ with measure $\abs{S}$, let $S^*=[-\tfrac{1}{2}|S|,\tfrac{1}{2}|S|]$ be its symmetric rearrangement. Given $h\in X$, we define the symmetric decreasing rearrangement of $\rho=h_x\in L^2$ as
\begin{equation}
    \rho^*=\int_0^\infty \sone_{\{y\mid \rho(y)>t\}^*}\diff{t},
\end{equation}
where $\sone_S$ denotes the indicator function of the set $S$. 
Geometrically, the measure of every level set of $\rho^*$ equals the one of $\rho$. 

We prove the existence of the minimizer by the direct method in the calculus of variations. Then the kernel-based representation with a rearrangement theorem leads to the symmetry and unimodality of the energy minimizer in one periodic. For $-1<m\leq 0$, we further obtain the regularity of the energy minimizer.
To sum up, we have the following results. 

\begin{thm}[bunching profile: existence]\label{thm..existence}
    If $A>0$, $\eps>0$, and $-1<m<1<n$, then there exists a global minimizer $h$ of energy \eqref{eq..continuum.energy.definition} in $X$ , i.e.,
    \begin{equation}
        E[h]=\min_{g\in X}E[g].
    \end{equation}
\end{thm}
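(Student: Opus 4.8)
The plan is to use the direct method of the calculus of variations, working throughout with the kernel-based representation of Proposition \ref{prop..kernel-basedRepresentation} and the slope $\rho=h_x$. The first step is coercivity and boundedness from below. Since $n+1>2$, the local density satisfies $\Phi_{m,n}(\xi)\geq \frac{\gamma}{n(n+1)}\xi^{n+1}-C$, because the $\xi^{m+1}$ (or $\xi\log\xi$) contribution is of lower order as $\xi\to\infty$ and bounded near $0$. For the nonlocal part, $K_m\in L^1(\Omega)$ together with Young's convolution inequality gives $\int_\Omega\int_\Omega\rho(x)\rho(y)K_m(x-y)\diff x\diff y\leq \norm{K_m}_{L^1(\Omega)}\norm{\rho}_{L^2(\Omega)}^2$. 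Interpolation on the bounded domain $\Omega$ yields $\norm{\rho}_{L^2(\Omega)}^2\leq\delta\norm{\rho}_{L^{n+1}(\Omega)}^{n+1}+C_\delta$ for every $\delta>0$; choosing $\delta$ small enough to absorb the negative quadratic term into the superquadratic local term, I would obtain $E[h]\geq c\norm{\rho}_{L^{n+1}(\Omega)}^{n+1}-C'\geq c'\norm{\rho}_{L^2(\Omega)}^2-C''$. In particular $E$ is bounded below, and by the Poincaré inequality (recall $\int_\Omega\tilde h=0$) any minimizing sequence $\{h_k\}$ is bounded in $H^1_\#(\Omega)$, with slopes $\{\rho_k\}$ bounded in $L^{n+1}(\Omega)$.

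Next I would extract a convergent subsequence. By reflexivity and the Rellich--Kondrachov theorem, along a subsequence $\tilde h_k\rightharpoonup\tilde h$ weakly in $H^1_\#(\Omega)$ with $\tilde h_k\to\tilde h$ strongly in $L^2(\Omega)$, while $\rho_k\rightharpoonup\rho$ weakly in $L^2(\Omega)$ and in $L^{n+1}(\Omega)$, where $h=\tilde h+Ax$ and $\rho=h_x$. Since $X$ is convex and closed (as noted in the excerpt), it is weakly closed, so the constraints $\int_\Omega\tilde h=0$, periodicity, and $\rho\geq 0$ a.e. all pass to the limit and $h\in X$.

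It then remains to prove that $E$ is weakly lower semicontinuous along this sequence. For the local term, $\Phi_{m,n}''(\xi)=\xi^{m-1}+\gamma\xi^{n-1}>0$ for $\xi>0$ shows $\Phi_{m,n}$ is convex and lower semicontinuous on $\sR$ (with $\Phi_{m,n}=+\infty$ on $(-\infty,0)$) and bounded below, so the Tonelli lower semicontinuity theorem gives $\int_\Omega\Phi_{m,n}(\rho)\diff x\leq\liminf_k\int_\Omega\Phi_{m,n}(\rho_k)\diff x$. For the nonlocal term, the key point is that convolution with $K_m$ is a compact operator on $L^2(\Omega)$: on the torus it is the Fourier multiplier with symbol $\hat K_m(j)$, bounded by $\norm{K_m}_{L^1(\Omega)}$ and tending to $0$ as $\abs{j}\to\infty$ by the Riemann--Lebesgue lemma, hence an operator-norm limit of its finite-rank truncations. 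Thus $\rho_k\rightharpoonup\rho$ weakly forces $K_m*\rho_k\to K_m*\rho$ strongly in $L^2(\Omega)$, and pairing weak against strong convergence yields $\int_\Omega\int_\Omega\rho_k(x)\rho_k(y)K_m(x-y)\to\int_\Omega\int_\Omega\rho(x)\rho(y)K_m(x-y)$; that is, $I_m$ is in fact weakly continuous. Combining the two pieces gives $E[h]\leq\liminf_k E[h_k]=\inf_{g\in X}E[g]$, so $h$ is the sought minimizer.

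The hard part will be the nonlocal quadratic term. Because the $H^1$ bound yields strong $L^2$ convergence of $\tilde h_k$ but only weak $L^2$ convergence of its derivative $\rho_k$, weak lower semicontinuity is not automatic for the indefinite (negative) quadratic form $-I_m$. The crux is therefore to exploit the structure of $K_m$---specifically that $K_m\in L^1(\Omega)$ makes the convolution operator compact on $L^2(\Omega)$---so as to upgrade weak convergence of $\rho_k$ into strong convergence of $K_m*\rho_k$, rendering $I_m$ weakly continuous rather than merely semicontinuous. A secondary technical point is getting the coercivity estimate to correctly balance the negative nonlocal contribution against the superquadratic growth $n+1>2$ of the local energy, which is what the interpolation and choice of $\delta$ accomplish.
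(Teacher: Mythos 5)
Your proposal is correct and follows essentially the same route as the paper's proof: the direct method with the kernel-based representation of Proposition \ref{prop..kernel-basedRepresentation}, coercivity from Young's convolution inequality plus the superquadratic ($n+1>2$) growth of $\Phi_{m,n}$, weak closedness of the convex set $X$, convexity for lower semicontinuity of the local term, and weak continuity of the nonlocal term. If anything, you make explicit the one step the paper leaves implicit—upgrading $\rho_k\rightharpoonup\rho$ to strong $L^2$ convergence of $K_m\ast\rho_k$ via compactness of convolution with an $L^1$ kernel (Riemann--Lebesgue)—which is exactly what is needed for the paper's assertion that $\lim_k\{I_m[h^k]-I_m[h_*]\}=0$; the only blemish is your claimed bound $\Phi_{m,n}(\xi)\geq\tfrac{\gamma}{n(n+1)}\xi^{n+1}-C$, which for $-1<m<0$ requires shrinking the coefficient (e.g.\ to $\tfrac{\gamma}{2n(n+1)}$ as in the paper) to absorb the negative sublinear term, a harmless adjustment since you only use generic constants afterwards.
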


\begin{thm}[bunching profile: symmetry and unimodality]\label{thm..sysmmetry}
	If $A>0$, $\eps>0$, and $-1<m<1<n$, then the global energy minimizer $h\in X$ of $E$ satifies, up to a translation,
    \begin{equation}
       \rho=\rho^*.
    \end{equation}
    In other words, the step density $\rho$ of the minimizer is symmetric and unimodal. As a result, the support of $\rho$ in the period $\Omega$ is a closed interval centered at zero, i.e., $\mathrm{supp}(\rho):=\{x\in\Omega\mid\overline{\{\rho(x)\neq0\}}=[-R_0,R_0]$ for some $R_0\in(0,\frac{1}{2}]$. 
\end{thm}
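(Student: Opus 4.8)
The plan is to exploit the kernel-based representation from Proposition~\ref{prop..kernel-basedRepresentation} and reduce the statement to a rearrangement inequality for the nonlocal bilinear form. Writing $\rho=h_x$, recall
\begin{equation}
    E[h]=-\tfrac{1}{2}J[\rho]+\eps^{1-m}\int_{\Omega}\Phi_{m,n}(\rho)\diff{x},\qquad J[\rho]:=\int_{\Omega}\int_{\Omega}\rho(x)\rho(y)K_m(x-y)\diff{x}\diff{y}.
\end{equation}
The first observation is that both the constraint set and the local energy are insensitive to the symmetric decreasing rearrangement: since $\rho^*$ has the same distribution function as $\rho$, we have $\int_{\Omega}\rho^*\diff{x}=\int_{\Omega}\rho\diff{x}=A$ and $\int_{\Omega}\Phi_{m,n}(\rho^*)\diff{x}=\int_{\Omega}\Phi_{m,n}(\rho)\diff{x}$, while $\rho^*\geq 0$ is automatic. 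Hence the antiderivative $h^*(x):=\int_{-1/2}^{x}\rho^*(s)\diff{s}+c$, with $c$ chosen so that $\int_{\Omega}(h^*-Ax)\diff{x}=0$, satisfies $h^*_x=\rho^*\geq 0$ and $h^*-Ax\in V$ (periodicity following from $\int_{\Omega}(\rho^*-A)\diff{x}=0$), so $h^*\in X$ and it has the same local energy as $h$. Minimizing $E$ over $X$ is therefore equivalent to maximizing $J$ subject to the constraints, and it suffices to show $J[\rho]\leq J[\rho^*]$ with equality only for translates of $\rho^*$.

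The core step is the periodic Riesz rearrangement inequality. Because $K_m$ is nonnegative, $1$-periodic, even, and coincides with its own symmetric decreasing rearrangement (being strictly decreasing on $(0,\tfrac12)$), one expects
\begin{equation}
    J[\rho]=\int_{\Omega}\int_{\Omega}\rho(x)\rho(y)K_m(x-y)\diff{x}\diff{y}\leq\int_{\Omega}\int_{\Omega}\rho^*(x)\rho^*(y)K_m(x-y)\diff{x}\diff{y}=J[\rho^*].
\end{equation}
I would establish this on the torus $\Omega$ by the two-point (polarization) method: show that $J$ does not decrease under any circular two-point symmetrization, and that iterating polarizations with respect to a suitable sequence of caps drives $\rho$ to $\rho^*$ in $L^2$, along which $J$ is continuous. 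Combining the two displays gives $E[h^*]\leq E[h]$.

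Since $h$ is a global minimizer, $E[h]\leq E[h^*]$, so in fact $E[h]=E[h^*]$ and equality must hold in the rearrangement inequality, $J[\rho]=J[\rho^*]$. The final and most delicate point is the equality analysis, which I expect to be the main obstacle: the sharp form of Riesz's inequality on the circle, together with its cases of equality, is considerably more subtle than on $\sR$. Here the strict monotonicity of $K_m$ on $(0,\tfrac12)$ is essential, as it rules out the degenerate equality configurations (kernels constant on plateaus) and forces every superlevel set of $\rho$ to equal, up to one common rigid rotation $x_0$ of the circle, an interval centered at $x_0$; equivalently $\rho=\rho^*(\cdot-x_0)$. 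I would either invoke the known equality characterization for strictly symmetric-decreasing kernels or argue directly that any maximizer of $J$ under the mass and $L^2$ constraints has this structure. After translating so that $x_0=0$ we obtain $\rho=\rho^*$, and the symmetry and unimodality are then immediate from the definition of the symmetric decreasing rearrangement. For the support statement, $A>0$ forces $\rho\not\equiv 0$, so $\{\rho^*>0\}$ is a nonempty symmetric set that is decreasing away from the origin, hence an interval whose closure is $[-R_0,R_0]$ with $0<R_0\leq\tfrac12$.
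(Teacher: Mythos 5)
Your proposal is correct and follows essentially the same route as the paper: invariance of the local term and constraints under rearrangement, the kernel-based representation of Proposition~\ref{prop..kernel-basedRepresentation}, the periodic Riesz rearrangement inequality for the strictly decreasing kernel $K_m$ together with its equality characterization (translates of $\rho^*$), and the same support argument. The only difference is that the paper cites this inequality and its equality case directly from Kawohl \cite{Kawohl1985Rearrangements} in Appendix~\ref{sec:appendix.Riesz.rearrangement}, whereas you sketch a polarization (two-point symmetrization) proof of it --- a valid but unnecessary detour given the available citation.
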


\begin{thm}[bunching profile: regularity]\label{thm..regularity}
    If $A>0$, $\eps>0$, and $-1<m\leq 0$, $1<n$, then for the global energy minimizer $h\in X$ of $E$ with $\mathrm{supp}(h_x)=[-R_0,R_0]$, we have $h$ is smooth on $[-R_0, R_0]$ and $h$ is constant on $(R_0,1-R_0)$. 
\end{thm}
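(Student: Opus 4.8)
The plan is to derive the Euler--Lagrange equation satisfied by the minimizer on the interior of its support, to solve it for $\rho=h_x$ as an explicit smooth function of the convolution potential $K_m*\rho$, and then to bootstrap using the smoothing property of that convolution. The constancy statement is the easy half: since $\mathrm{supp}(\rho)=[-R_0,R_0]$ by Theorem~\ref{thm..sysmmetry}, we have $h_x=\rho=0$ a.e.\ on the complementary arc, and as this arc is connected on the circle $\sR/\sZ$ it follows at once that $h$ is constant on $(R_0,1-R_0)$. For the smoothness on the support I would first record the variational structure. Writing $w:=K_m*\rho$ and using the kernel representation of Proposition~\ref{prop..kernel-basedRepresentation}, the minimization of $E$ over $\{\rho\ge 0,\ \int_\Omega\rho=A\}$ gives, by the KKT conditions with a multiplier $\lambda$ for the mass constraint, that on $\{\rho>0\}$
\begin{equation*}
  \eps^{1-m}\Phi_{m,n}'(\rho(x))=\lambda+w(x),
\end{equation*}
together with the complementarity inequality $\eps^{1-m}\Phi_{m,n}'(0^+)\ge\lambda+w$ on $\{\rho=0\}$. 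By symmetry and unimodality the set $\{\rho>0\}$ is exactly $(-R_0,R_0)$, so the equality holds throughout the interior of the support.

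Since $\Phi_{m,n}''(\xi)=\xi^{m-1}+\gamma\xi^{n-1}>0$ on $(0,\infty)$, the map $\Phi_{m,n}'$ is a real-analytic strictly increasing bijection of $(0,\infty)$ onto its range, which for $-1<m\le 0$ is all of $\sR$ because $\Phi_{m,n}'(0^+)=-\infty$ and $\Phi_{m,n}'(+\infty)=+\infty$ (read off from \eqref{eq..derivativePhi}). Hence $G:=(\Phi_{m,n}')^{-1}$ is a real-analytic map $\sR\to(0,\infty)$ and
\begin{equation*}
  \rho(x)=G\!\left(\eps^{-(1-m)}(\lambda+w(x))\right)\qquad\text{on }(-R_0,R_0).
\end{equation*}
This reduces the regularity of $\rho$ to that of $w$, composed with a fixed smooth function.

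The smoothing order of $\rho\mapsto w$ is then identified as follows. Since the nonlocal operator appearing in $\mu$ is the fractional Laplacian $|\partial|^{1+m}$ (cf.\ Remark~\ref{rmk..fractional.Laplacian}), matching it with the kernel representation $w=K_m*\rho$ through $\rho=h_x$ shows that $\widehat{w}(k)=c\,|k|^{m-1}\widehat{\rho}(k)$ for $k\neq0$; that is, convolution with $K_m$ is the periodic Riesz potential of order $1-m\in[1,2)$, with $K_m(x)\sim c|x|^{-m}$ (for $m<0$) or $\sim c\log\frac{1}{|x|}$ (for $m=0$) near the origin. Starting from $\rho\in L^2$ we get $w\in H^{1-m}\hookrightarrow C^{0,\alpha}$ (as $1-m\ge 1>\tfrac12$), so $w$ is bounded and continuous and $\rho=G(\eps^{-(1-m)}(\lambda+w))$ is continuous with values in a compact subset of $(0,\infty)$ on each compact subinterval of $(-R_0,R_0)$. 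Now alternate the two operations ``raise the total Hölder order by $1-m$'' (the Riesz potential $\rho\mapsto w$, acting locally in the interior) and ``preserve the Hölder order'' (the composition $w\mapsto G(\cdots)$ with $G$ smooth). Each pair of steps gains $1-m>0$ orders of regularity, so $\rho\in C^\infty$, and hence $h\in C^\infty$, on $(-R_0,R_0)$.

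The main obstacle is the behaviour at the free boundary $\pm R_0$, and relatedly whether a free boundary occurs at all. Because $w$ is continuous and bounded, the formula $\rho=G(\eps^{-(1-m)}(\lambda+w))$ forces $\rho$ to stay bounded below by a positive constant up to $x=R_0$, which is incompatible with $\rho(R_0)=0$ unless $R_0=\tfrac12$; equivalently, since $\Phi_{m,n}'(0^+)=-\infty$, the complementarity inequality above can never hold on a set where $\rho=0$. I would make this rigorous by a mass-preserving competitor: turning on a small bump of height $t$ on a would-be empty region changes $E$ by a strictly negative term of order $\eps^{1-m}t^{m+1}$ with $m+1<1$, which dominates the $O(t)$ changes of the nonlocal and mass terms and strictly lowers the energy. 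Thus for $-1<m\le 0$ the minimizer has full support, $R_0=\tfrac12$, the complementary arc is empty, the constancy clause is vacuous, and the interior bootstrap already yields smoothness on the whole circle $[-\tfrac12,\tfrac12]$, removing the endpoint difficulty entirely. The remaining points --- existence and finiteness of $\lambda$, justification of the first-variation/KKT identity for the constrained minimizer, and the local mapping properties of the periodic Riesz potential in Hölder spaces --- are routine, and it is precisely here that the hypothesis $m\le 0$ (giving $1-m\ge 1$ and $\Phi_{m,n}'(0^+)=-\infty$) is used decisively.
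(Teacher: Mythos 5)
Your proposal is correct, and its core coincides with the paper's own route, which runs through Proposition \ref{prop..first.variation}: both derive the Euler--Lagrange identity $\eps^{1-m}\Phi_{m,n}'(\rho)=K_m\ast\rho-\lambda$ on the support, both exploit boundedness of the potential $K_m\ast\rho$ for $-1<m\le 0$ (the paper via $K_m\in L^2(\Omega)$ and Young's inequality; you via the Fourier symbol $c\abs{k}^{m-1}$ and $H^{1-m}\hookrightarrow C^{0,\alpha}$ --- your identification of $K_m\ast$ as a periodic Riesz potential of order $1-m$ is correct and consistent with the computation in Proposition \ref{prop..EquivalentFormulationNonlocalEnergy}), and both then conclude $0<\rho_{\min}\le\rho\le\rho_{\max}$ from the blow-up of $\Phi_{m,n}'$ at $0^+$ and $+\infty$. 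You diverge in the bootstrap: the paper differentiates to get the quasilinear equation \eqref{eq..elliptic.PDE} and cites standard elliptic regularity, whereas you invert $\Phi_{m,n}'$ --- legitimate for $m\le 0$, since by \eqref{eq..derivativePhi} it is a real-analytic increasing bijection of $(0,\infty)$ onto $\sR$ --- writing $\rho=G\bigl(\eps^{-(1-m)}(\lambda+w)\bigr)$ and alternating the gain of $1-m\ge1$ derivatives under $K_m\ast$ with composition by the smooth $G$. Your version is more self-contained, since \eqref{eq..elliptic.PDE} is nonlocal and the appeal to elliptic theory is somewhat loose. Your genuinely new element is the claim that for $-1<m\le 0$ the minimizer has full support, $R_0=\tfrac12$; this is correct, and it buys regularity up to $\pm R_0$ --- a point that interior elliptic estimates alone do not settle --- because after this observation there is no free boundary at all. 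Two corrections, neither fatal. First, in the competitor argument the stated gain ``of order $\eps^{1-m}t^{m+1}$ with $m+1<1$'' fails at $m=0$ (there $m+1=1$); the correct gain is $\eps\abs{B}\,t\log t$, which still dominates the $O(t)$ cost since $\abs{t\log t}\gg t$, so the conclusion stands. Second, the remark that continuity of $w$ is ``incompatible with $\rho(R_0)=0$'' is not by itself an argument: a priori $\rho$ is only an $L^2$ density and may jump at $\pm R_0$ (the paper's Proposition \ref{prop..first.variation} tolerates exactly such a jump, with $\rho\ge\rho_{\min}>0$ inside the support and $\rho=0$ outside), so the mass-preserving competitor, not the continuity heuristic, must carry the proof --- as you yourself indicate.
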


These theorems together tells us that the system energetically prefers the symmetric and unimodal profile which is also smooth. Such a surface is the step-bunching profile in the literature. We provide Figure \ref{figure..bunching} to illustrate such a bunching profile.

\begin{figure}
  \begin{center}
  \mbox{(a)\includegraphics[height=0.33\textwidth]{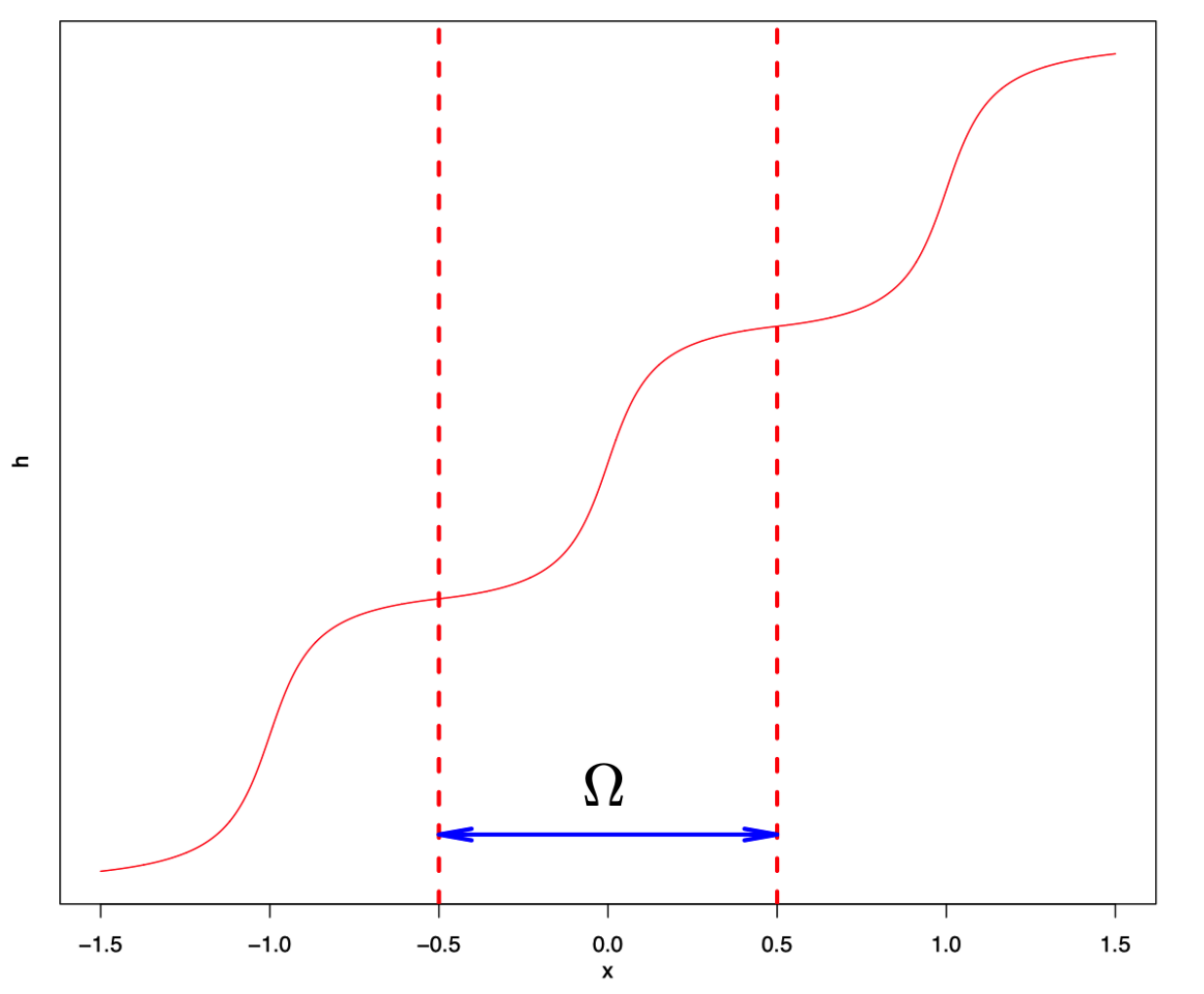}
\hspace{10pt}
  (b)\includegraphics[height=0.33\textwidth]{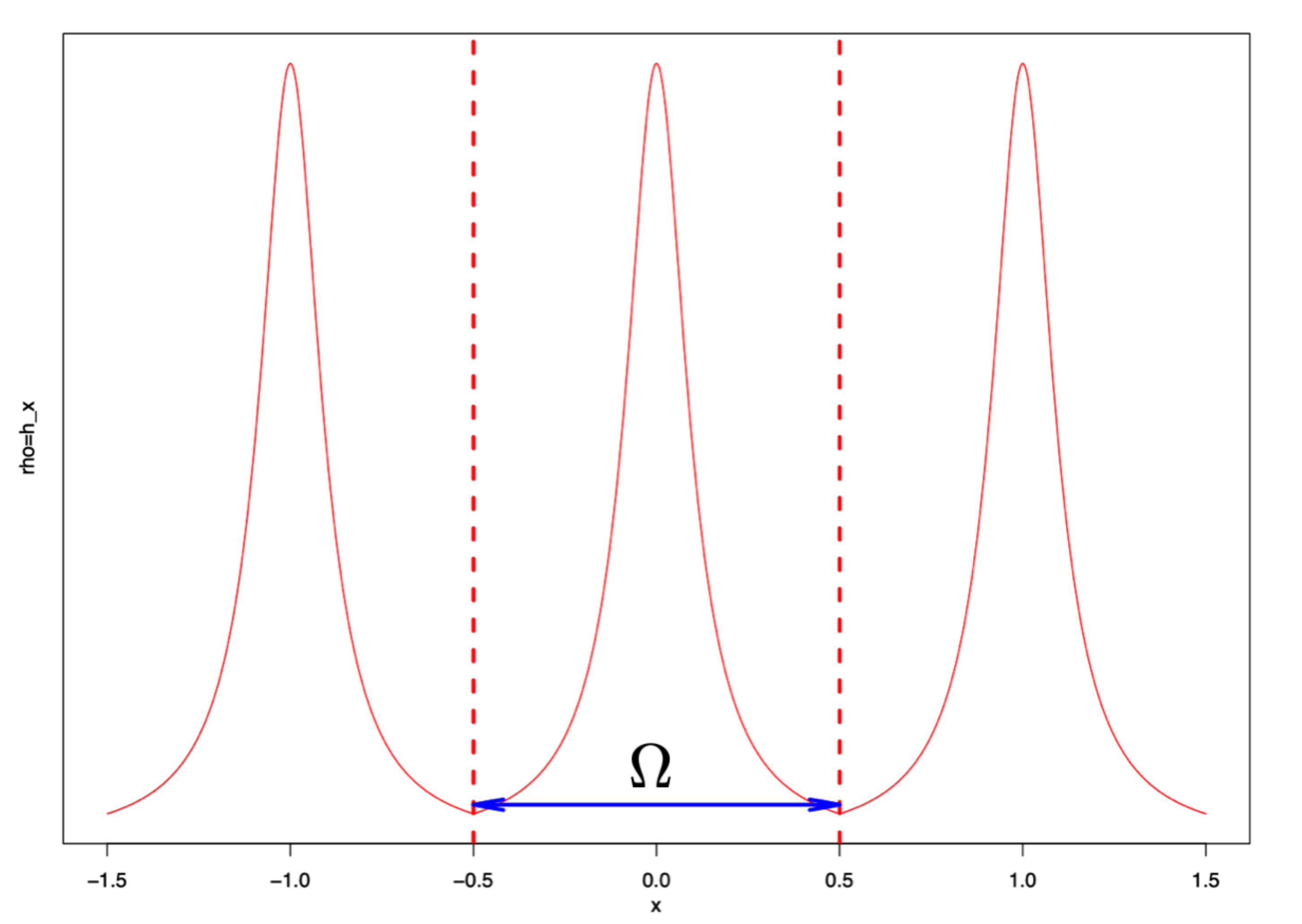}}
  \end{center}
  \caption{(a) A continuous vicinal surface $h$ with step-bunching in the periodic setting. Theorem \ref{thm..sysmmetry} shows there is a single bunch in each period if the profile attains the minimum energy. 
  (b) The step density $\rho=h_x$ (derivative of the height function $h$) of the bunching profile (a).}
  \label{figure..bunching}
\end{figure}


\textbf{(iii) Minimum energy scaling law of the step-bunching profile.}
For any function $h\in X$, we have another representation of the total energy, up to a null Lagrangian $W[h]$,
\begin{align}
    E[h]
    &= -\tilde{I}_m[h]-W[h]+\eps^{1-m}\int_{\Omega} \Phi_{m,n}(h_x)\diff{x},\label{eq..continuum.energy.definition2}\\
    \tilde{I}_m[h]
    &:= \frac{1}{2}\int_{\Omega}\tilde{h}(x)\mathrm{P.V.}\int_{\sR}\frac{(x-y)\tilde{h}_x(y)}{\abs{x-y}^{m+2}}\diff{y}\diff{x}.\label{eq..I_m[h].definition2}
\end{align}
Then we derive a Fourier series-based representation of $\tilde{I}_m[h]$ (See Propostion \ref{prop..EquivalentFormulationNonlocalEnergy}). Thanks to the regularity, this representation holds for the energy minimizer. Consequently, we show the following energy scaling law for the original continuum model \cite{Xiang2002Derivation}. 

\begin{thm}[bunching profile: minimum energy scaling law]\label{thm..energy.scaling.law}
    If $A>0$, $m=0$, and $1<n$, then there exists an $\eps_0$ and positive constants $C,C'$ such that for any $\eps<\eps_0$ the following energy scaling law holds
    \begin{equation}
            -\frac{A^2}{n}\abs{\log \eps}-C\leq\inf_{h\in X}E[h]\leq  -\frac{A^2}{n}\abs{\log \eps}+C'.
    \end{equation}
\end{thm}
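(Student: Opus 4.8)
The plan is to prove the two bounds separately, reading $E$ as a competition between the attractive nonlocal term $-I_0[h]$, which is lowered by concentrating the step density $\rho=h_x$ into a single bunch, and the repulsive local term $\eps\int_\Omega\Phi_{0,n}(\rho)\,\D x$, whose leading part $\frac{\gamma}{n(n+1)}\rho^{n+1}$ (see \eqref{eq..Phi_m,n.definition}) penalizes large densities; here $m=0$, so $\eps^{1-m}=\eps$ and the convolution kernel $K_0$ of Proposition \ref{prop..kernel-basedRepresentation} is logarithmic. Since the slope constraint fixes $\int_\Omega\rho=A$, squeezing all of this mass onto an interval of half-width $R$ forces $\rho\sim A/R$ there, so the nonlocal gain scales like $A^2\abs{\log R}$ (the logarithmic self-energy of mass $A$ at scale $R$, whose constant is read off from $K_0$ or from the Fourier representation of Proposition \ref{prop..EquivalentFormulationNonlocalEnergy}) while the local cost scales like $\eps R^{-n}$. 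Balancing $-cA^2\abs{\log R}+c'\eps R^{-n}$ in $R$ selects the critical width $R\sim\eps^{1/n}$, whence $\abs{\log R}\sim\frac1n\abs{\log\eps}$ and $\inf E\sim-\frac{A^2}{n}\abs{\log\eps}$.

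\textbf{Upper bound.} For $\inf_{h} E\le-\frac{A^2}{n}\abs{\log\eps}+C'$ I would exhibit an explicit competitor. Take $R=\eps^{1/n}$ and let $\rho_\eps=\frac{A}{2R}\sone_{[-R,R]}$, extended $1$-periodically, with $h_\eps$ its primitive normalized so that $h_\eps-Ax$ has zero mean; then $h_\eps\in X$ since $\rho_\eps\ge0$ and $\int_\Omega\rho_\eps=A$. Evaluating $E[h_\eps]$ through Proposition \ref{prop..kernel-basedRepresentation}, the double integral against $K_0$ produces the logarithmic self-energy $\sim A^2\abs{\log R}$ plus an $O(1)$ remainder from the bounded part of $K_0$, and the local integral is $\eps\int_{-R}^{R}\Phi_{0,n}(\tfrac{A}{2R})\,\D x=O(\eps R^{-n})+O(\eps\abs{\log R})$. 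With $R=\eps^{1/n}$ both error terms are $O(1)$ and the leading term is $-\frac{A^2}{n}\abs{\log\eps}$; smoothing $\rho_\eps$ (or, to sharpen the constant, using a near-optimal profile solving $\mu=\mathrm{const}$ on its support) changes only lower-order terms, yielding $C'$.

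\textbf{Lower bound.} For $\inf_{h} E\ge-\frac{A^2}{n}\abs{\log\eps}-C$ I would use Theorems \ref{thm..existence}, \ref{thm..sysmmetry} and \ref{thm..regularity} to restrict to the minimizer, which up to translation has $\rho=\rho^*$ symmetric, decreasing, and supported on $[-R_0,R_0]$; equivalently, since $K_0\ge0$ is even and decreasing, Riesz rearrangement gives $\int\!\int\rho\rho K_0\le\int\!\int\rho^*\rho^* K_0$, so it suffices to bound $E$ below on symmetric decreasing densities. Using representation \eqref{eq..continuum.energy.definition2} (the null Lagrangian $W[h]$ being $O(1)$ on such profiles) together with the Fourier representation of Proposition \ref{prop..EquivalentFormulationNonlocalEnergy}, I would estimate the nonlocal energy by splitting frequencies at a cutoff $K$: the low modes satisfy $\abs{\hat\rho(k)}\le\int_\Omega\rho=A$ and contribute at most $\sim A^2\log K$, while the high modes are controlled by $\norm{\rho}_{L^2}^2/K$ and hence by the local energy, since $\eps\int_\Omega\rho^{n+1}$ dominates any over-concentration. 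Optimizing $K$ (equivalently, bounding the logarithmic energy by $A^2\log(\text{concentration scale})$ and bounding that scale below by the local penalty) and then minimizing the resulting one-variable expression over the size of $\int_\Omega\rho^{n+1}$ reproduces the critical balance at scale $\eps^{1/n}$ and gives the matching constant, with all remainders absorbed into $C$.

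\textbf{Main obstacle.} The delicate point is the lower bound, and specifically that the decisive estimate is of borderline (endpoint) type: the logarithm appears precisely because mass $A$ on scale $R$ sits at the failure of the embedding controlling the $\dot H^{-1/2}$-type nonlocal energy, so naive frequency truncation or Sobolev/Hölder bounds are too lossy and leak a power of $\eps$ rather than a constant. The crux is therefore to extract the sharp logarithmic constant — matching the upper bound — by coupling the low-frequency logarithm to the $\eps\int_\Omega\rho^{n+1}$ penalty at exactly the scale $R\sim\eps^{1/n}$, which is where the precise normalizations in Propositions \ref{prop..kernel-basedRepresentation} and \ref{prop..EquivalentFormulationNonlocalEnergy}, rather than order-of-magnitude bounds, must be invoked.
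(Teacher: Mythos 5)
Your proposal follows essentially the same route as the paper's own proof: the upper bound uses the same one-bunch competitor (constant density $\sim A\eps^{-1/n}$ on an interval of width $\sim\eps^{1/n}$, evaluated through the kernel representation of Proposition \ref{prop..kernel-basedRepresentation} with $K_0(z)=-\log\sin(\pi z)$), and the lower bound is exactly the frequency-splitting interpolation inequality that the paper isolates as Proposition \ref{prop..I_m.upper.bound} — low modes bounded via $\abs{\hat\rho(k)}\le A$ giving $A^2\log N$, high modes bounded by $\norm{\rho}_{L^2}^2/N$ and absorbed into the local term $\eps\int_\Omega\Phi_{0,n}(\rho)\diff{x}$ by a one-variable minimization, with cutoff $N=\eps^{-1/n}$ — applied to the minimizer, whose regularity (Theorem \ref{thm..regularity}) justifies the Fourier representation of Proposition \ref{prop..EquivalentFormulationNonlocalEnergy}. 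The balancing at scale $\eps^{1/n}$ and the resulting constant $A^2/n$ coincide with the paper's argument, so the proposal is correct in outline and essentially identical in approach.
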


Although we use totally different proofs in this paper, all results are consistent with those of the discrete models \cite{Luo2016Energy,Luo2021Energy}.

The rest of this paper is organized as follows. In section 3, we derive a generalized continuum model corresponding to the Lennard--Jones $(m,n)$ interaction by analyzing the aymptotics of chemical potential (Theorem \ref{thm..consistency}). In section 4, we study the minimization problem and establish the existence, symmetry, unimodality, and regularity of the bunching profile of the generalized continuum model (Theorems \ref{thm..existence}, \ref{thm..sysmmetry}, and \ref{thm..regularity}). In section 5, the minimum energy scaling law is obtained for the original continuum model (Theorem \ref{thm..energy.scaling.law}).

\section{Generalized continuum model}\label{sec..generalized.cont.model}
In this section, we derive the generalized continuum model by asymptotic analysis of the discrete model and show their consistency in terms of chemical potentials. Our derivation essentially follows \cite{Xiang2002Derivation}, but the result is rigorous under the following assumption:
\begin{assump}[regularity of the vicinal surface]\label{assump..regularity.vicinal.surface}
    Suppose that the step location as a function of the vicinal surface height $x(h)$ satisfies the following conditions:
    \begin{enumerate}
        \item[(1)] $x(h)$ is real analytic. Moreover, for any $\xi\in \sR$, the Taylor expansion holds for all $h\in\sR$: $x(\xi+h)=\sum_{j=0}^\infty \frac{1}{j!}x^{(j)}(\xi) h^j$.
        \item[(2)] $p_0:=\inf_{h\in\sR}x_h(h)>0$.
    \end{enumerate}
\end{assump}

The Taylor expansion in the first condition will be helpful to calculate the Euler--Maclaurin expansion when we apply Theorem \ref{thm..SigularEulerMaclaurin}. The second condition corresponds the fact that the step density cannot be infinity.
Since our aim here is to present a general strategy to derive a continuum model from a discrete one, we are not going to pursuit the least regularity assumption. The validation and properties of the new continuum model on a generic vicinal surface, which may or may not satisfies Assumption \ref{assump..regularity.vicinal.surface}, will be left to the following sections and future works.

The following proposition provides our asymptotic expansion techniques, which is an extension of the derivation in \cite{Xiang2002Derivation}. Under Assumption \ref{assump..regularity.vicinal.surface}, the asymptotic expansion is rigorously proved. Moreover, Proposition \ref{prop..asymptotic.expansion.chemical.potential} provides a unified procedure to asymptotically expanding the chemical potential for $-1<m<1<n$ in all physically meaningful regimes.
\begin{prop}[asymptotic expansions for chemical potential]\label{prop..asymptotic.expansion.chemical.potential}
    Suppose that Assumption \ref{assump..regularity.vicinal.surface} holds. If $-1<m<1<n$, then at $\xi=ia$, we have
    \begin{align}
        \sigma_{i}^{(n)}
        &=-(n+1)\zeta(n)a^{1-n}x_{hh}(\xi)x_{h}^{-n-2}(\xi)+o(a^{1-n}),\\
        \sigma_i^{(m)}
        &=\int_0^\infty G_m(h;\xi)\diff{h} -(m+1)\zeta(m)a^{1-m}x_{hh}(\xi)x_h^{-m-2}(\xi)+O(a^2).\label{eq..-1<s<1.force.consistency}
    \end{align}
\end{prop}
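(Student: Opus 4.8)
The plan is to reduce both expansions to the fine behavior of the single–variable function $G_s(h;\xi)$ as $h\to0$, together with its decay as $h\to\infty$, and then to convert the discrete sum $a\sum_{k\ge1}G_s(ka;\xi)$ into an integral plus zeta–regularized corrections. First I would expand $G_s$ near the origin. Writing $a_j=x^{(j)}(\xi)/j!$, Assumption \ref{assump..regularity.vicinal.surface}(1) gives the globally convergent series $u:=x(\xi+h)-x(\xi)=a_1h+a_2h^2+a_3h^3+\cdots$ and $v:=x(\xi)-x(\xi-h)=a_1h-a_2h^2+a_3h^3-\cdots$. Factoring out $(a_1h)^{-s-1}$ and expanding $(1+w)^{-s-1}$ with $w=u/(a_1h)-1$, $\tilde w=v/(a_1h)-1$, one checks that the constant and the $O(h^2)$ contributions of $(1+w)^{-s-1}-(1+\tilde w)^{-s-1}$ cancel, leaving
\[
    G_s(h;\xi)=-(s+1)\,x_{hh}(\xi)\,x_h^{-s-2}(\xi)\,h^{-s}+O(h^{-s+2}).
\]
The vanishing of the $h^{-s+1}$ coefficient is a parity cancellation, and it is exactly what will upgrade the remainder in \eqref{eq..-1<s<1.force.consistency} from $O(a^{2-m})$ to $O(a^2)$.

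Next I would use Assumption \ref{assump..regularity.vicinal.surface}(2) to control the far field. Since $x_h\ge p_0>0$ we have $u,v\ge p_0 h$, and because $-s-1<0$ this yields $\abs{G_s(h;\xi)}\le 2(p_0h)^{-s-1}$ for all $h>0$. Combining the far–field bound (for $h\ge1$) with the origin expansion (for $0<h<1$) produces the uniform majorant $\abs{G_s(h;\xi)}\le C(\xi)h^{-s}$ valid for all $h>0$, with $C(\xi)$ independent of $a$.

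For $\sigma_i^{(n)}$ with $n>1$ I would argue by dominated convergence applied to $a^{n-1}\sigma_i^{(n)}=a^n\sum_{k\ge1}G_n(ka;\xi)$. For each fixed $k$ the origin expansion gives the pointwise limit $a^nG_n(ka;\xi)\to-(n+1)x_{hh}x_h^{-n-2}k^{-n}$ as $a\to0$, while the uniform bound gives the $a$–independent summable majorant $a^n\abs{G_n(ka;\xi)}\le C(\xi)k^{-n}$. Summing in $k$ yields $-(n+1)\zeta(n)x_{hh}x_h^{-n-2}$, which is the claimed leading term; the integral $\int_0^\infty G_n\,\diff h$ is $O(1)=o(a^{1-n})$ and is absorbed into the remainder. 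For $\sigma_i^{(m)}$ with $-1<m<1$ this route fails because $\sum_k k^{-m}$ diverges, so I would instead invoke the singular Euler--Maclaurin formula (Theorem \ref{thm..SigularEulerMaclaurin}) for $a\sum_{k\ge1}G_m(ka;\xi)$. Matching the origin expansion $G_m(h;\xi)=c_0h^{-m}+c_2h^{-m+2}+\cdots$ with $c_0=-(m+1)x_{hh}x_h^{-m-2}$, the leading term of the formula is $\int_0^\infty G_m(h;\xi)\,\diff h$, the $h^{-m}$ singularity produces the correction $c_0\zeta(m)a^{1-m}$, the next singular power $c_2h^{-m+2}$ produces a term of order $a^{3-m}=o(a^2)$ (since $m<1$), and the regular Bernoulli corrections, applied to the remainder that vanishes to high order at the origin by the parity cancellation, are $O(a^2)$. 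This delivers \eqref{eq..-1<s<1.force.consistency}.

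The main obstacle is the rigorous control of the remainder in this singular Euler--Maclaurin step: one must split $G_m$ into its finitely many origin–singular terms plus a sufficiently regular, decaying remainder, correctly zeta–regularize the divergent sums $\sum_k(ka)^{-m+2j}$ arising from each singular power, and then bound the remainder's contribution uniformly in $a$ despite the fact that $\int_0^\infty G_m\,\diff h$ need only be conditionally convergent at infinity when $m\le0$. The two hypotheses of Assumption \ref{assump..regularity.vicinal.surface} are precisely the inputs that make this bookkeeping rigorous: global validity of the Taylor expansion legitimizes the termwise origin expansion of $G_m$ (rather than a purely formal one), and $p_0>0$ furnishes the far–field decay needed to justify the integral and the uniform remainder estimates.
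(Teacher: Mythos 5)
Your proposal follows essentially the same route as the paper's proof. For $n>1$ the paper also concludes by dominated convergence using the lower bound $x_h\ge p_0$ (it organizes the pointwise limit via the mean value theorem rather than via your majorant $\abs{G_n(h;\xi)}\le C(\xi)h^{-n}$, a cosmetic difference), and for $-1<m<1$ the paper likewise sets $g(h)=h^mG_m(h;\xi)$, proves that only even powers of $h$ appear in its expansion (your parity cancellation, giving $g'(0)=0$ and hence no $a^{2-m}$ term), and then invokes Theorem \ref{thm..SigularEulerMaclaurin}. Two caveats. First, your aside that ``$\int_0^\infty G_n\,\diff{h}$ is $O(1)$'' is false for $n>1$: that integral diverges at the origin like $h^{-n}$; fortunately it is never needed, since your dominated-convergence argument already yields the stated expansion for $\sigma_i^{(n)}$, which contains no integral term. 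Second, what you defer as ``the main obstacle'' --- bounding the Euler--Maclaurin remainder by $Ca^2$ uniformly in $a$ --- is exactly where the paper spends the bulk of its proof, and it is simpler than the bookkeeping you describe: no splitting of $G_m$ into singular pieces and no extra zeta-regularized sums $\sum_k (ka)^{-m+2j}$ are required. The paper takes $p=1$ in Theorem \ref{thm..SigularEulerMaclaurin}, expresses $g^{(2)}$ through the globally convergent even power series guaranteed by Assumption \ref{assump..regularity.vicinal.surface}(1), and bounds $R_2$ by splitting the $h$-integral at $h=1$, using absolute convergence of the series and its derivative at $h=1$ on $[0,1]$ and the decay of $h^{-m}\left[g(h)-g(0)\right]$ on $[1,\infty)$. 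Your outline is compatible with this, but as written the remainder estimate remains a sketch rather than a proof, so you should either carry out the paper's $p=1$ computation or fully execute your $p=2$ variant (where the $r=2,3$ terms are $O(a^{3-m})=o(a^2)$ and $R_4$ must be shown to be $O(a^4)$, or at least $O(a^2)$, by the same kind of argument).
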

We remark that the improper integral of $G_s$ is defined as
\begin{equation}
    \int_0^\infty G_s(h;\xi)\diff{h}
    =\lim_{M\to+\infty,\delta\to0^+}\left\{
    \int_{\xi+\delta}^{\xi+M}\frac{\diff{h}}{(x(h)-x(\xi))^{s+1}}-
    \int_{\xi-M}^{\xi-\delta}\frac{\diff{h}}{(x(\xi)-x(h))^{s+1}}
    \right\}.
\end{equation}
\begin{proof}
    (1)
    For $n>1$, we have
    \begin{align*}
        \lim_{a\to 0} a^{n-1} \sigma_{i}^{(n)}
        &= \lim_{a\to 0} a^n\sum_{k=1}^{\infty}\left[\frac{1}{(x_{i+k}-x_i)^{n+1}}-\frac{1}{(x_{i}-x_{i-k})^{n+1}}\right]\\
        &= -\lim_{a\to 0} a^n\sum_{k=1}^{\infty}\left[\frac{(x_{i+k}-x_i)^{n+1}-(x_{i}-x_{i-k})^{n+1}}{(x_{i+k}-x_i)^{n+1}(x_{i}-x_{i-k})^{n+1}}\right]\\
        &= -\lim_{a\to 0} a^n\sum_{k=1}^\infty\frac{
        (ka)^{n+2}(n+1)(x_h(\xi_k))^n x_{hh}(\xi'_k)
        }{(ka)^{2n+2}\left(\frac{x_{i+k}-x_i}{ka}\right)^{n+1}\left(\frac{x_{i}-x_{i-k}}{ka}\right)^{n+1}}\\
        &= -a^n\sum_{k=1}^\infty k^{-n}\lim_{a\to 0}\frac{
        (n+1)(x_h(\xi_k))^n x_{hh}(\xi'_k)
        }{\left(\frac{x_{i+k}-x_i}{ka}\right)^{n+1}\left(\frac{x_{i}-x_{i-k}}{ka}\right)^{n+1}}\\
        &= -(n+1)\zeta(n) x_{hh}(\xi) x_{h}^{-n-2}(\xi).
    \end{align*}
    Here $\xi_k,\xi'_k\in((i-k)a,(i+k)a)$. In the third and the last equality, we have used the mean value theorem. In fourth equality, we apply the dominated convergence theorem and exchange the limit with the sum because all derivatives of $x$ are continuous and $\left(\frac{x_{i+k}-x_i}{ka}\right)^{n+1}\left(\frac{x_{i}-x_{i-k}}{ka}\right)^{n+1}\geq (\inf_{h\in\sR}x_h(h))^{2n+2}=p_0^{2n+2}>0$.

    (2) If $-1<m<1$, we define
    \begin{equation*}
        g(h)=\left\{
        \begin{array}{ll}
            h^m G_m(h;\xi), & h>0,\\
            -(m+1)x_{hh}(\xi)x_{h}^{-m-2}(\xi), & h=0.
        \end{array}
        \right.
    \end{equation*}
    It is easy to check that $g(h)$ is infinitely differentiable on $[0,\infty)$. Moreover, we will show that the Maclaurin expansion of $g(h)$ holds in a small neighbor of $0$. We use the Taylor expansion and obtain $\frac{x(\xi+h)-x(\xi)}{h}=\sum_{r=0}^{\infty}c_{r+1}h^r$ and $\frac{x(\xi)-x(\xi-h)}{h}=\sum_{r=0}^{\infty}c_{r+1}(-h)^r$, where $c_r=\frac{1}{r!}x^{(r)}(\xi)$.
    Thanks to the positivity of $x_h$ and the mean value theorem, we have $\frac{x(\xi+h)-x(\xi)}{h}\geq p_0>0$ and $\frac{x(\xi)-x(\xi-h)}{h}\geq p_0>0$ for all $h>0$. Note that $f(\xi)=\xi^{-(m+1)}$ is analytic for $\xi\geq p_0$ with $m+1>0$. Since the composition of two analytic functions remains analytic, we may write
    \begin{equation*}
        \left(\frac{x(\xi+h)-x(\xi)}{h}\right)^{-m-1}
        =:\sum_{r=0}^\infty c'_r h^r,\quad h\in\sR.
    \end{equation*}
    Hence we have for $h>0$
    \begin{align*}
        g(h)
        &= h^{-1}\left\{\left(\frac{x(\xi+h)-x(\xi)}{h}\right)^{-m-1}-\left(\frac{x(\xi)-x(\xi-h)}{h}\right)^{-m-1}\right\}\\
        &= h^{-1}\sum_{r=0}^\infty c'_r [h^r-(-h)^r]= \sum_{r=0}^{\infty}2c'_{2r+1} h^{2r}.
    \end{align*}
    Taking limit $h\to 0$, the above equality still holds. By the uniqueness of the Maclaurin expansion, we conclude that $2c'_{2r+1}=\frac{1}{(2r)!}g^{(2r)}(0)$, $g^{(2r+1)}(0)=0$ for $r\geq 0$ and that
    \begin{equation}
        g(h)=\sum_{r=0}^\infty \frac{g^{(2r)}(0)}{(2r)!}h^{2r},\,\,h\geq 0.\label{eq..g(h).Maclaurin}
    \end{equation}

    Applying Theorem \ref{thm..SigularEulerMaclaurin} with $G_m(h;\xi)$ and $g(h)$, we have the asymptotic expansion
    \begin{align*}
        \int_{0}^{\infty}G_m(h;\xi)\diff{h}
        &= a \sum_{j=1}^{\infty}G_m(h_j;\xi)-\zeta(m)g(0)a^{1-m}-\zeta(m-1)g'(0)a^{2-m}+R_2,\\
        R_{2}
        &= a^{2}\int_{0}^{\infty}\frac{\bar{B}_{2}(h/a)-B_{2}}{2}\frac{\D^{2}}{\D h^{2}}
        \left\{\int_{0}^{h}g^{(2)}(h')h^{-m}(h-h')\diff{h}'
        \right\}\diff{h}.
    \end{align*}
    Note that $\sigma_i^{(m)}=a \sum_{j=1}^{\infty}G_m(h_j;\xi)$, $-\zeta(m)g(0)a^{1-m}=(m+1)\zeta(m)x_{hh}(\xi)x_{h}^{-m-2}(\xi)a^{1-m}$ and $\zeta(m-1)g'(0)a^{2-m}=0$. To prove the proposition, it is sufficient to show that $\abs{R_2}\leq Ca^2$.
    Since $\bar{B}_2(\cdot)$ are bounded on $\sR$ and is independent of $g$, it follows that
    \begin{equation*}
        \abs{R_2}\leq C a^2\int_0^\infty\Abs{\frac{\D^2}{\D h^2}\left\{
        \int_{0}^{h}g^{(2)}(h')h^{-m}(h-h')\diff{h}'
        \right\}}\diff{h}.
    \end{equation*} 
    By direct calculation, we have $g^{(2)}(h')=\sum_{r=1}^\infty\frac{g^{(2r)}(0)}{(2r-2)!}(h')^{2r-2}$ and
    $\int_{0}^{h} (h')^{2r-2}(h-h')\diff{h}'=\frac{1}{2r(2r-1)}h^{2r}$ for $r\geq 1$.
    Therefore
    \begin{align*}
        \frac{\D^2}{\D h^2}\left\{
        \int_{0}^{h}g^{(2)}(h')h^{-m}(h-h')\diff{h}'
        \right\}
        &= \frac{\D^2}{\D h^2}\left\{h^{-m}\sum_{r=1}^\infty\frac{g^{(2r)}(0)}{(2r-2)!}
        \int_{0}^{h} (h')^{2r-2}(h-h')\diff{h}'
        \right\}\\
        &= \frac{\D^2}{\D h^2}\left\{h^{-m}\sum_{r=1}^\infty\frac{g^{(2r)}(0)}{(2r)!}h^{2r}
        \right\}.
    \end{align*}
    Now we split the estimate of the integral in $\abs{R_2}$ into two parts, the first part is
    \begin{align*}
        &~~~~\int_{0}^1\Abs{\frac{\D^2}{\D h^2}\left\{h^{-m}\sum_{r=1}^\infty\frac{g^{(2r)}(0)}{(2r)!}h^{2r}
        \right\}}\diff{h}\\
        &\leq \int_{0}^1\Abs{\sum_{r=1}^\infty\frac{(2r-m)(2r-m-1)g^{(2r)}(0)}{(2r)!}h^{2r-m-2}}\diff{h}\\
        &\leq \sum_{r=1}^\infty\frac{\abs{g^{(2r)}(0)}}{(2r)!}(2r-m)\Abs{\int_{0}^1(2r-m-1)h^{2r-m-2}\diff{h}}\\
        &\leq \sum_{r=1}^\infty\frac{\abs{g^{(2r)}(0)}}{(2r)!}(2r+1)\\
        &\leq \sum_{r=1}^\infty\frac{\abs{g^{(2r)}(0)}}{(2r-1)!}+\sum_{r=1}^\infty\frac{\abs{g^{(2r)}(0)}}{(2r)!}
        \leq C,
    \end{align*}
    where in the last inequality, we use the facts that 
    the power series \eqref{eq..g(h).Maclaurin} and its derivative both converge absolutely at $h=1$.

    The integral in the second part of $\abs{R_2}$ is
    \begin{align*}
        \int_{1}^\infty\Abs{\frac{\D^2}{\D h^2}\left\{h^{-m}\sum_{r=1}^\infty\frac{g^{(2r)}(0)}{(2r)!}h^{2r}
        \right\}}\diff{h}
        &\leq \int_1^\infty\Abs{\frac{\D^2}{\D h^2}\left\{h^{-m}\left[g(h)-g(0)\right]\right\}}\diff{h}\leq C.
    \end{align*}
    where we use the fast decay property of the integrand at $+\infty$. As a result, $\abs{R_2}\leq Ca^2$. The proof is completed.
\end{proof}
\begin{rmk}
    If $m=0$, then $\int_0^\infty G_0(h;\xi)\diff{h}=\int_{-\infty}^\infty \frac{1}{x(\xi+h)-x(\xi)}\diff{h}$ and \eqref{eq..-1<s<1.force.consistency} reduces to the result in \cite{Xiang2002Derivation}:
    \begin{equation*}
        \sigma_i^{(0)}
        = \int_{-\infty}^\infty \frac{1}{x(\xi+h)-x(\xi)}\diff{h} +\frac{1}{2}a x_{hh}x_h^{-2}+O(a^{2}).
    \end{equation*}
\end{rmk}

\begin{rmk}
    Note that $x_h=h_x^{-1}$ and $x_{hh}=-h_{xx}h_x^{-3}$. Suppose that $\frac{\alpha_2}{\alpha_1}=O(a^{n-m})$ which is the physically interesting regime. For $1<m<n$, we have the leading order of the chemical potential satisfies the equation
    \begin{equation}
        (m+1)\zeta(m)h_{xx}h_x^{m-1}-\frac{\alpha_2a^m}{\alpha_1 a^{n}}(n+1)\zeta(n)h_{xx}h_x^{n-1}=0.
    \end{equation}
    Therefore the solution is linear, i.e., $h(x)=Ax+B$, hence there is no bunching. This is consistent with the case of the well-known Lennard--Jones $(6,12)$ potential \cite{Luo2021Energy}.
\end{rmk}

The Cauchy principal value is unaffected by the change of variable from $h$ to $x$.
\begin{prop}[change of variable]\label{prop..change.of.variable}
    Suppose that Assumption \ref{assump..regularity.vicinal.surface} holds. If $-1<m<1$, then we have the following equality at $x(\xi)=x_i$
    \begin{equation}
        \int_0^\infty G_m(h;\xi)\diff{h}
        =-\mathrm{P.V.}\int_{-\infty}^{+\infty}\frac{(x_i-x)h_x(x)}{\abs{x_i-x}^{m+2}}\diff{x}.\label{eq..change.of.variable}
    \end{equation}
\end{prop}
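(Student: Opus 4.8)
The plan is to apply the substitution $x=x(h)$ directly to the regularized formula defining $\int_0^\infty G_m(h;\xi)\diff{h}$ and then to check that the regularization it produces agrees with the symmetric Cauchy principal value on the right-hand side of \eqref{eq..change.of.variable}. By Assumption \ref{assump..regularity.vicinal.surface} the height-to-position map $h\mapsto x(h)$ is real analytic, strictly increasing, and satisfies $x_h\geq p_0>0$, so it is a $C^1$-diffeomorphism and $h_x=1/x_h$ is bounded by $1/p_0$. On each of the two finite, singularity-avoiding integrals in the definition the substitution is routine: writing $\diff{h}=h_x(x)\diff{x}$ and using $x(h)-x(\xi)=x-x_i$, the first integral becomes $\int_{x_i+\delta_1}^{x_i+M_1}(x-x_i)^{-m-1}h_x(x)\diff{x}$ and the second becomes $\int_{x_i-M_2}^{x_i-\delta_2}(x_i-x)^{-m-1}h_x(x)\diff{x}$, where $\delta_1:=x(\xi+\delta)-x_i$, $\delta_2:=x_i-x(\xi-\delta)$, $M_1:=x(\xi+M)-x_i$, and $M_2:=x_i-x(\xi-M)$.

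Next I would record the elementary sign bookkeeping showing that these two $x$-integrals are exactly the two one-sided halves of $-\mathrm{P.V.}\int \frac{(x_i-x)h_x(x)}{\abs{x_i-x}^{m+2}}\diff{x}$: once the leading minus sign is absorbed, the kernel equals $+(x-x_i)^{-m-1}$ for $x>x_i$ and $-(x_i-x)^{-m-1}$ for $x<x_i$. Thus the substituted expression and the target principal value differ \emph{only} in the cutoffs used at the singularity $x=x_i$ and at infinity, and the entire content of the proposition is that these cutoff choices give the same limit as $\delta\to0^+$ and $M\to\infty$.

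The crux is therefore to symmetrize the cutoffs. I would choose the principal-value cutoffs to be precisely $\delta_2$ (inner) and $M_2$ (outer); then the two integrals over $x<x_i$ are literally identical and cancel, leaving only two slivers over $x>x_i$, namely $\int_{x_i+\delta_1}^{x_i+\delta_2}$ and $\int_{x_i+M_2}^{x_i+M_1}$ of $(x-x_i)^{-m-1}h_x(x)$. For the inner sliver, $\Abs{\int_{\delta_1}^{\delta_2}u^{-m-1}\diff{u}}\lesssim \abs{\delta_1-\delta_2}\,\delta^{-m-1}$, and a second-order Taylor expansion of $x$ at $\xi$ gives $\delta_1,\delta_2=x_h(\xi)\delta+O(\delta^2)$ and $\delta_1-\delta_2=x_{hh}(\xi)\delta^2+O(\delta^3)$, so this sliver is $\lesssim \delta^{1-m}\to0$ since $m<1$. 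For the outer sliver, $\Abs{\int_{M_2}^{M_1}u^{-m-1}\diff{u}}\lesssim \abs{M_1-M_2}\,M_{\min}^{-m-1}$ with $M_{\min}=\min(M_1,M_2)$ and $M_1-M_2=x(\xi+M)+x(\xi-M)-2x_i$; provided $x(h)$ departs from its mean linear growth only sublinearly so that $\abs{M_1-M_2}$ stays bounded, and since $m>-1$ forces $M_{\min}^{-m-1}\to0$, this sliver also vanishes. Passing to the limit then yields \eqref{eq..change.of.variable}.

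I expect the outer-cutoff symmetrization to be the main obstacle. The inner mismatch is purely local and is killed by Taylor's theorem together with $h_x\leq 1/p_0$, but the outer one requires quantitative control of $x(h)$ at infinity: for $-1<m\leq0$ the individual one-sided tails diverge, so the two sides cannot be estimated separately and one must exploit the cancellation built into the principal value (which is exactly why matching the $x<x_i$ cutoffs to $\delta_2,M_2$ is the right bookkeeping). The boundedness of $\abs{M_1-M_2}$ is the precise hypothesis that makes the surviving $x>x_i$ outer sliver a thin band of vanishing height rather than a full divergent tail, and I would make sure it is either supplied by the standing regularity/growth assumptions or subsumed in the convergence of the improper integral itself.
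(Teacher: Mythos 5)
Your proposal is correct and takes essentially the same route as the paper's proof: change variables on the truncated one-sided integrals, then show the mismatch between the image cutoffs and symmetric principal-value cutoffs vanishes, with the inner sliver controlled by Taylor expansion at $\xi$ (giving $O(\delta^{1-m})$, using $m<1$) and the outer sliver by boundedness of the deviation of $x(h)$ from linear growth (giving $O(M^{-m-1})$, using $m>-1$). The only differences are bookkeeping — you match the principal-value cutoffs to the negative-side images $\delta_2,M_2$ so that side cancels identically, whereas the paper symmetrizes both sides to the cutoffs $x_h(\xi)\delta$ and $M/A$ — and the growth hypothesis you rightly flag as needing justification is precisely what the paper uses implicitly when it writes $x(h)=h/A+\tilde{x}(h)$ with $\tilde{x}$ bounded.
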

\begin{proof}
    By definition
    \begin{align*}
        &~~\int_0^\infty G_m(h;\xi)\diff{h}\\
        &= \lim_{M\to+\infty,\delta\to0^+}
        \left\{\int_{\xi+\delta}^{\xi+M}
        \frac{\D h}{(x(h)-x(\xi))^{m+1}}-\int_{\xi-M}^{\xi-\delta}\frac{\D h}{(x(\xi)-x(h))^{m+1}}\right\},\\
        &~~-\mathrm{P.V.}\int_{-\infty}^{+\infty}\frac{(x_i-x)h_x}{\abs{x_i-x}^{m+2}}\diff{x}\\
        &= \lim_{M'\to+\infty,\delta'\to0^+}
        \left\{\int_{x_i+\delta'}^{x_i+M'}\frac{h_x}{(x-x_i)^{m+1}}\diff{x}
        -\int_{x_i-M'}^{x_i-\delta'}\frac{h_x}{(x_i-x)^{m+1}}\diff{x}
        \right\}.
    \end{align*}
    For fixed $\delta$ and $M$, we change the independent variable from $h$ to $x$:
    \begin{align*}
        \int_{\xi+\delta}^{\xi+M}\frac{\D h}{(x(h)-x(\xi))^{m+1}}
        &= \int_{x(\xi+\delta)}^{x(\xi+M)}\frac{h_x(x)}{(x-x_i)^{m+1}}\diff{x}\\
        &= \left(\int_{x(\xi+\delta)}^{x_i+x_h(\xi)\delta}
        +\int_{x_i+x_h(\xi)\delta}^{x_i+\frac{M}{A}}
        +\int_{x_i+\frac{M}{A}}^{x(\xi+M)}
        \right)\frac{h_x(x)}{(x-x_i)^{m+1}}\diff{x}.
    \end{align*}
    Note that $\abs{h_x}=\abs{x_h}^{-1}\leq p_0^{-1}$.
    Thus
    \begin{align*}
        \int_{x(\xi+\delta)}^{x_i+x_h(\xi)\delta}\frac{h_x}{(x-x_i)^{m+1}}\diff{x}
        &= \int_{x_i+x_h(\xi)\delta+O(\delta^2)}^{x_i+x_h(\xi)\delta}\frac{h_x}{(x-x_i)^{m+1}}\diff{x}
        =O(\delta^{1-m}),\\
        \int_{x_i+\frac{M}{A}}^{x(\xi+M)}\frac{h_x}{(x-x_i)^{m+1}}\diff{x}
        &=\Abs{x(\xi+M)-x_i-\frac{M}{A}}O(M^{-m-1})\\
        &=\Abs{\frac{\xi+M}{A}+\tilde{x}(\xi+M)-\frac{\xi}{A}-\tilde{x}(\xi)-\frac{M}{A}}O(M^{-m-1})\\
        &=O(M^{-m-1}).
    \end{align*}
    Therefore, we have
    \begin{equation}
        \int_{\xi+\delta}^{\xi+M}
        \frac{\D h}{(x(h)-x(\xi))^{m+1}}
        =\int_{x_i+x_h(\xi)\delta}^{x_i+\frac{M}{A}}\frac{h_x}{(x-x_i)^{m+1}}\diff{x}+O(\delta^{1-s})+O(M^{-m-1}),\label{eq..change.of.variable.+}
    \end{equation}
    and similarly,
    \begin{equation}
        -\int_{\xi-M}^{\xi-\delta}
        \frac{\D h}{(x(\xi)-x(h))^{m+1}}
        =-\int_{x_i-\frac{M}{A}}^{x_i-x_h(\xi)\delta}\frac{h_x}{(x_i-x)^{m+1}}\diff{x}+O(\delta^{1-m})+O(M^{-m-1}).\label{eq..change.of.variable.-}
    \end{equation}
    To finish the proof, we combine \eqref{eq..change.of.variable.+} and \eqref{eq..change.of.variable.-}, then let $\delta\to0$ and $M\to+\infty$.
\end{proof}
The assumption $-1<m<1$ is required to guarantee that the truncation error in the Cauchy principle value is negligible.

\begin{proof}[Proof of Theorem \ref{thm..consistency}]
    Combining Propositions \ref{prop..asymptotic.expansion.chemical.potential} and \ref{prop..change.of.variable}, we have at $x=x_i$
    \begin{align*}
        \mu^\mathrm{a}
        &= -\mathrm{P.V.}\int_{-\infty}^{+\infty}\frac{(x-y)h_x(y)}{\abs{x-y}^{m+2}}\diff{y} -(m+1)\zeta(m)a^{1-m}x_{hh}x_h^{-m-2}+O(a^2)\\
        &~~~~+\frac{\alpha_2}{\alpha_1}\left[a^{1-n}(n+1)\zeta(n) x_{hh} x_{h}^{-n-2}+o(a^{1-n})\right].
    \end{align*}
    By using $x_h^{-1}=h_x$ and $x_{hh}=-h_{xx}h_x^{-3}$, we obtain
    \begin{align*}
        & -(m+1)\zeta(m)a^{1-m}x_{hh}x_h^{-m-2}=(m+1)\zeta(m)a^{1-m}\frac{h_{xx}}{h_{x}^{1-m}}=-\eps^{1-m}\frac{h_{xx}}{h_{x}^{1-m}},\\
        & \frac{\alpha_2}{\alpha_1}a^{1-n}(n+1)\zeta(n) x_{hh} x_{h}^{-n-2}
        =-\frac{\alpha_2a^m}{\alpha_1a^n} (n+1)\zeta(n)a^{1-m}h_{xx}h_x^{n-1}
        =-\gamma\eps^{1-m}h_{xx}h_x^{n-1}.
    \end{align*}
    Therefore, $\mu^\mathrm{a}=\mu+O(a^2)+o(\frac{\alpha_2}{\alpha_1}a^{1-n})=\mu+o(\eps^{1-m})$.
\end{proof}

Before we finish this section, we show the connection of our non-local integral to other definitions of the non-local integral of $h$.
\begin{prop}[equivalent definitions of non-local integral]\label{prop..relation.two.definition.non-local}
    Suppose that Assumption \ref{assump..regularity.vicinal.surface} holds. If $-1<m<1$, then
    \begin{equation}
        \mathrm{P.V.}\int_{-\infty}^{+\infty}\frac{(x-y)f_x(y)}{\abs{x-y}^{m+2}}\diff{y}
        =(m+1)\mathrm{P.V.}\int_{-\infty}^{+\infty}\frac{f(x)-f(y)}{\abs{x-y}^{m+2}}\diff{y}
        .\label{eq..relation.two.definition.non-local}
    \end{equation}
\end{prop}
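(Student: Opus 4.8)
The plan is to read the identity as an integration by parts in the inner variable $y$, exploiting the fact that the singular kernel on the left is, up to the constant $m+1$, an antiderivative of the kernel on the right. Set $K(y):=\frac{x-y}{\abs{x-y}^{m+2}}$, the kernel appearing on the left-hand side. Differentiating directly for $y\neq x$ (writing $\abs{x-y}^{-m-2}=((x-y)^2)^{-(m+2)/2}$ and applying the product and chain rules) gives
\begin{equation*}
    K'(y)=\frac{\D}{\D y}\frac{x-y}{\abs{x-y}^{m+2}}=(m+1)\abs{x-y}^{-m-2},\quad y\neq x.
\end{equation*}
Hence the integrand on the right is precisely $(m+1)\frac{f(x)-f(y)}{\abs{x-y}^{m+2}}=(f(x)-f(y))K'(y)$, and the whole claim reduces to transferring the derivative from $K$ onto $f(x)-f(y)$.

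First I would regularize both principal values consistently by integrating over $\{\delta<\abs{x-y}<R\}$ and then integrating by parts on each of the two intervals $(x-R,x-\delta)$ and $(x+\delta,x+R)$. Since $\frac{\D}{\D y}(f(x)-f(y))=-f_x(y)$, integration by parts reproduces exactly $\int f_x(y)K(y)\diff{y}$, the left-hand integrand, together with boundary terms at $y=x\pm R$ and $y=x\pm\delta$. Letting $R\to\infty$, the outer boundary terms $(f(x)-f(y))K(y)$ vanish because $K(y)=O(\abs{y}^{-m-1})$ with $m>-1$, under the boundedness/decay of $f$ appropriate to the application (in particular when $f=\tilde{h}$ is periodic, hence bounded).

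The delicate point, which I expect to be the main obstacle, is the pair of inner boundary terms at $y=x\pm\delta$, where $K$ blows up. Using $K(x+\delta)=-\delta^{-m-1}$, $K(x-\delta)=\delta^{-m-1}$, and the second-order Taylor expansion $f(x\pm\delta)=f(x)\pm f_x(x)\delta+\tfrac12 f_{xx}(x)\delta^2+o(\delta^2)$ (justified by the analyticity in Assumption \ref{assump..regularity.vicinal.surface}), the two contributions are $-f_x(x)\delta^{-m}-\tfrac12 f_{xx}(x)\delta^{1-m}$ and $+f_x(x)\delta^{-m}-\tfrac12 f_{xx}(x)\delta^{1-m}$. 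The two $\delta^{-m}$ terms, which would diverge when $m>0$, cancel exactly by the odd symmetry of $K$ about $y=x$, leaving a residual of order $\delta^{1-m}$ that tends to $0$ as $\delta\to0$ precisely because $m<1$. This is exactly where the hypothesis $-1<m<1$ enters: $m>-1$ controls the decay at infinity, while $m<1$ makes the surviving boundary contribution negligible. Passing to the limit $\delta\to0$ then equates the two principal values and establishes \eqref{eq..relation.two.definition.non-local}; only local $C^2$ regularity of $f$ near $x$ (for the Taylor step) and integrability of the tails are needed beyond the stated hypotheses.
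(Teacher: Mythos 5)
Your proposal is correct and follows essentially the same route as the paper's own proof: symmetric truncation of both principal values, a single integration by parts (resting on the fact that the monopole kernel is, up to the factor $m+1$, an antiderivative of $\abs{x-y}^{-m-2}$), cancellation of the $\delta^{-m}$ inner boundary terms by odd symmetry leaving an $O(\delta^{1-m})$ residue (this is where $m<1$ enters), and decay of the outer boundary terms (this is where $m>-1$ enters). The only minor difference is that the paper combines the two outer boundary terms into the second difference $\bigl(f(x+M)+f(x-M)-2f(x)\bigr)/M^{m+1}$, so that a linear trend in $f$ (as for a vicinal surface of nonzero average slope) also cancels, whereas you estimate them separately under a boundedness assumption on $f$.
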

\begin{proof}
    Integration by parts leads to
    \begin{align*}
        & 
        \int_{x+\delta}^{x+M}\frac{f_x(y)}{(y-x)^{m+1}}\diff{y}
        -\int_{x-M}^{x-\delta}\frac{f_x(y)}{(x-y)^{m+1}}\diff{y}\\
        &= (m+1)\int_{x+\delta}^{x+M}\frac{f(y)-f(x)}{(y-x)^{m+2}}\diff{y}
        +(m+1)\int_{x-M}^{x-\delta}\frac{f(y)-f(x)}{(x-y)^{m+2}}\diff{y}\\
        &~~+\left.\left[\frac{f(y)-f(x)}{(y-x)^{m+1}}\right]\right|_{y=x+\delta}^{x+M}
        -\left.\left[\frac{f(y)-f(x)}{(x-y)^{m+1}}\right]\right|_{y=x-M}^{x-\delta}.
    \end{align*}
    The proof completes by taking the limit and noticing that the boundary contribution vanishes as $M\to+\infty,\delta\to0^+$. In fact, we have
    \begin{align*}
        & \lim_{M\to+\infty,\delta\to0^+}\left\{\left.\left[\frac{f(y)-f(x)}{(y-x)^{m+1}}\right]\right|_{y=x+\delta}^{x+M}
        -\left.\left[\frac{f(y)-f(x)}{(x-y)^{m+1}}\right]\right|_{y=x-M}^{x-\delta}\right\}\\
        &= \lim_{M\to+\infty,\delta\to0^+}\left\{\frac{2f(x)-f(x-\delta)-f(x+\delta)}{\delta^{m+1}}
        +\frac{f(x+M)+f(x-M)-2f(x)}{M^{m+1}}
        \right\}\\
        &= \lim_{M\to+\infty,\delta\to0^+}\left\{O(\delta^{1-m})+O(M^{-m-1})\right\}=0.
    \end{align*}
\end{proof}
\begin{rmk}\label{rmk..fractional.Laplacian}
    For a function $f$ in the Schwartz space $\fS(\sR^d)$ of rapidly decreasing $C^\infty$ functions on $\sR^d$, we remark that the expression in \eqref{eq..relation.two.definition.non-local} equals (up to a constant $C_{d,\sigma}$) the fractional Laplacian \cite{Landkof1972Foundations}
    \begin{equation*}
        (-\Delta)^{\sigma}f(x)=C_{d,\sigma}\mathrm{P.V.}\int_{\sR^d}\frac{f(x)-f(y)}{\abs{x-y}^{d+2\sigma}}\diff{y}.
    \end{equation*}
    The constant
    $C_{d,\sigma}=(\int_{\sR^d}\frac{1-\cos(\xi_1)}{\abs{\xi}^{d+2\sigma}}\diff{\xi})^{-1}$, $\xi=(\xi_1,\dots,\xi_d)^{\T}\in \sR^d$ \cite{DiNezza2012Hitchhikers}.
    Our problem corresponds to $d=1$, $\sigma=\frac{1+m}{2}$. Note that for a function $f\in\fS(\sR)$, the equality in Proposition \ref{prop..relation.two.definition.non-local} still holds by the same proof. In particular, let $m=0$ and set $f=h$, then $C_{1,\frac{1}{2}}=\frac{1}{\pi}$ and
    \begin{equation*}
        (-\Delta)^{\frac{1}{2}}h(x)
        =\frac{1}{\pi}\mathrm{P.V.}\int_{-\infty}^{\infty}\frac{h(x)-h(y)}{\abs{x-y}^{2}}\diff{y}
        =\frac{1}{\pi}\mathrm{P.V.}\int_{-\infty}^{+\infty}\frac{h_x(y)}{x-y}\diff{y}
        =H(h_x)(x),
    \end{equation*}
    where we use Proposition \ref{prop..relation.two.definition.non-local} in the second equality and the definition of Hilbert transform in the last equality:
    \begin{equation}
        H(f)(x)=\frac{1}{\pi}\mathrm{P.V.}\int_{-\infty}^{+\infty}\frac{f(y)}{x-y}\diff{y}.
    \end{equation}
    Hence we recover the Hilbert transform formulation of Xiang's continuum model \cite{Xiang2002Derivation}.
\end{rmk}

\section{Existence, symmetry, and regularity of the energy minimizer}
From now on, we consider the minimizer of the total energy in a periodic setting. After showing its existence by the direct method in the calculus of variations,
we prove the minimizer is symmetric and unimodal on one period $\Omega=[-\frac{1}{2},\frac{1}{2}]$ by using a rearrangement inequality. This leads us to the Euler--Lagrange equation of the variational problem as well as the regularity of the minimizer. 

To obtain the existence of the energy minimizer, we start with reformulating the non-local energy in terms of step density $\rho=h_x$.

\begin{prop}[kernel-based representation]\label{prop..kernel-basedRepresentation}
    If $A>0$ and $-1<m<1$, then we have for $h\in X$ with $\rho=h_x$, the non-local energy can be written as
    \begin{equation}
      I_m[h]
      =\frac{1}{2}\int_{\Omega}\int_{\Omega} \rho(x)\rho(y)K_m(x-y)\diff{x}\diff{y},\label{eq..slope.formulation}
    \end{equation}
    Here the kernel $K_m(\cdot)$ is $L^1(\Omega)$, non-negative, $1$-periodic, even, and strictly decreasing on $(0,\frac{1}{2})$.
    Moreover, if $-1<m\leq 0$, then $K_m(\cdot)$ is $L^2(\Omega)$.
\end{prop}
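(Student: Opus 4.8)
The plan is to reduce the doubly-singular non-local energy \eqref{eq..I_m[h].definition} to a single translation-invariant periodic convolution against an even kernel, and then to read off every asserted property of that kernel from an explicit primitive. Throughout I write $\Phi_m(z):=\sgn(z)\abs{z}^{-m-1}$, so that the inner integrand equals $\Phi_m(x-y)\tilde{h}_x(y)$, and I recall $\rho=h_x$ and $\tilde{h}_x=\rho-A$. First I would replace $\tilde{h}_x$ by $\rho$ in the inner integral: since $\Phi_m$ is odd, the symmetric principal value of $\Phi_m(x-y)\cdot A$ over $\sR$ vanishes, so the inner integral equals $\mathrm{P.V.}\int_{\sR}\Phi_m(x-y)\rho(y)\diff{y}$. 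Grouping $\sR=\bigcup_{k\in\sZ}(\Omega+k)$ and summing in the principal-value sense then yields $\int_{\Omega}\bar{\Phi}_m(x-y)\rho(y)\diff{y}$, where $\bar{\Phi}_m(z):=\mathrm{P.V.}\sum_{k\in\sZ}\Phi_m(z-k)$ is odd and $1$-periodic. Pairing the $\pm k$ images shows each pair decays like $\abs{k}^{-m-2}$, so this periodization converges (conditionally) for $-1<m<1$; alternatively one may start from the absolutely convergent difference form \eqref{eq..relation.two.definition.non-local} of Proposition~\ref{prop..relation.two.definition.non-local} to make the convergence at infinity manifest.

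Next I would introduce $K_m$ as the even, $1$-periodic primitive determined by $K_m'=-\bar{\Phi}_m$; concretely $K_m$ is the periodization of $\tfrac{1}{m}\abs{z}^{-m}$ (for $m\neq0$) or of $-\log\abs{z}$ (for $m=0$), normalized by $K_m(\tfrac12)=0$. Then $\bar{\Phi}_m(x-y)=-\partial_x K_m(x-y)$, so the inner integral is $-\partial_x\bigl(K_m*\rho\bigr)(x)$. Substituting into \eqref{eq..I_m[h].definition} and integrating by parts in $x$ transfers the derivative onto $h$, producing $h_x=\rho$ and hence the convolution form $\tfrac12\int_{\Omega}\int_{\Omega}\rho(x)\rho(y)K_m(x-y)\diff{x}\diff{y}$. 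The periodic boundary contributions generated here must be tracked with care, because the outer factor is the full $h=\tilde h+Ax$ (so $h(\tfrac12)-h(-\tfrac12)=A\neq0$); I would check that these pieces vanish or collect into the null Lagrangian $W[h]$ appearing in \eqref{eq..continuum.energy.definition2}, using periodicity of $K_m*\rho$ and the zero-mean condition on $\tilde h_x$.

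The remaining task is the analysis of $K_m$, which I expect to be routine. Evenness and $1$-periodicity are immediate from the construction. For integrability, near $z=0$ one has $K_m(z)\sim\tfrac{1}{m}\abs{z}^{-m}$ (or $-\log\abs{z}$), so $K_m\in L^1(\Omega)$ since $m<1$; and for $-1<m\leq0$ the singularity is $\abs{z}^{-m}=\abs{z}^{\abs{m}}$ (bounded) or logarithmic, whence $K_m\in L^2(\Omega)$. The crucial monotonicity I would obtain from convexity: differentiating twice gives $K_m''(z)=(m+1)\sum_{k\in\sZ}\abs{z-k}^{-m-2}>0$ on $(0,1)$ because $m+1>0$ and the series is termwise positive, so $K_m$ is strictly convex on $(0,1)$. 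Combined with the symmetry $K_m(\tfrac12+t)=K_m(\tfrac12-t)$ (a consequence of evenness and periodicity), strict convexity forces $K_m'(\tfrac12)=0$ and therefore $K_m'<0$ on $(0,\tfrac12)$, i.e.\ $K_m$ is strictly decreasing there; finally $K_m\geq K_m(\tfrac12)=0$ delivers non-negativity.

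The main obstacle is not the kernel analysis but the two analytic justifications in the reduction. The first is the conditionally convergent periodization and the interchange of the principal-value limit at infinity with the $k$-summation for the full range $-1<m<1$; the regime $m\to-1^+$ is the most delicate, since the images $\Phi_m(z-k)$ decay only like $\abs{k}^{-m-2}$ and one must exploit the odd-pairing cancellation rather than absolute convergence. The second is the boundary-term bookkeeping in the integration by parts, precisely because the outer factor $h$ is not periodic; reconciling these terms with the asserted clean convolution form (respectively with the null Lagrangian of \eqref{eq..continuum.energy.definition2}) is where the argument requires the most care.
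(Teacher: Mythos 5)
Your construction coincides with the paper's own proof: periodize the force kernel, take $K_m$ to be the primitive of $-\sum_{k\in\sZ}\frac{z+k}{\abs{z+k}^{m+2}}$ normalized by $K_m(\tfrac12)=0$, integrate by parts, and then establish the kernel properties. The kernel analysis is correct, and your route to monotonicity is a harmless variant: the paper shows directly that $-K_m'(z)=\sum_{k=0}^{\infty}\bigl[(k+z)^{-m-1}-(k+1-z)^{-m-1}\bigr]>0$ on $(0,\tfrac12)$ by pairing, while you use strict convexity $K_m''=(m+1)\sum_{k\in\sZ}\abs{z+k}^{-m-2}>0$ together with the symmetry about $z=\tfrac12$. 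One small caveat: the naive periodizations of $\tfrac1m\abs{z}^{-m}$ and of $-\log\abs{z}$ diverge termwise, so your ``concrete'' description must be read as a renormalized sum; the primitive-based definition (which you also give, and which the paper uses, cf.\ $K_0(z)=-\log\sin(\pi z)$) is the operative one.

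The genuine gap is exactly the step you flagged and deferred: the boundary term produced by the integration by parts, present because the outer factor $h=\tilde h+Ax$ is not periodic. It neither vanishes nor collects into the null Lagrangian. Since the inner principal-value integral equals $-\frac{\D}{\D x}\bigl(K_m\ast\tilde h_x\bigr)(x)$ and $K_m\ast\tilde h_x$ is $1$-periodic, integrating by parts on $\Omega=[-\tfrac12,\tfrac12]$ gives the exact identity
\begin{equation*}
  I_m[h]=\frac12\int_{\Omega}\int_{\Omega}\rho(x)\rho(y)K_m(x-y)\diff{x}\diff{y}
  -\frac{A}{2}\bigl(K_m\ast\rho\bigr)\bigl(\tfrac12\bigr),
\end{equation*}
and the correction is a point evaluation, not the null Lagrangian $W[h]=\frac{A}{2}\norm{\rho\ast K_m}_{L^1}$ of \eqref{eq..continuum.energy.definition2}; moreover no additive renormalization of $K_m$ removes it, because it depends on $\rho$ and not only on $\int_\Omega\rho=A$. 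That it cannot be dropped is seen from $h(x)=Ax\in X$: then $\tilde h_x\equiv 0$, so $I_m[h]=0$ by \eqref{eq..I_m[h].definition}, whereas the right-hand side of \eqref{eq..slope.formulation} equals $\frac{A^2}{2}\int_\Omega K_m>0$ since $K_m\geq 0$ is not identically zero. So \eqref{eq..slope.formulation} fails as stated; the paper's own proof silently discards this same boundary term in its final equality, so your worry is vindicated rather than resolved. Completing the argument requires either carrying the correction term (and propagating it through Propositions \ref{prop..first.variation} and \ref{prop..nullLagrangian}), or recasting the claim in terms of $\tilde I_m[h]$, whose periodic outer factor $\tilde h$ makes the boundary contribution genuinely vanish.
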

\begin{proof}
	For $-1<m<1$, let $K_m(z)$ be the antiderivative of $-\sum_{k\in\sZ}\frac{z+k}{|z+k|^{m+2}}$ satisfying $K_m(\frac{1}{2})=0$. Clearly, $K_m$ is $1$-periodic. 
	Note that for $z\in(0,\frac{1}{2})$, $-K'_m(z)=\sum_{k\in\sZ}\frac{z+k}{|z+k|^{m+2}}=\sum_{k=0}^\infty\left[\frac{k+z}{|k+z|^{m+2}}-\frac{k+1-z}{|k+1-z|^{m+2}}\right]>0$ and $K_m\geq 0$ is strictly decreasing on $(0,\frac{1}{2})$.
	Thus $-K_m(z)+K_m(1-z)=\int_{z}^{1-z}\sum_{k=0}^\infty\Big[\frac{k+w}{|k+w|^{m+2}}-\frac{k+1-w}{|k+1-w|^{m+2}}\Big]\diff{w}=0$ due to the parity of the integrand. Then $K_m(-z)=K_m(1-z)=K_m(z)$ and $K_m$ is even. 
	In particular, if $m=0$, then we have the explicit expression $K_0(z)=-\log\sin(\pi z)$ with $K'_0(z)=-\pi\cot(\pi z)$, thanks to the identity $\pi\cot(\pi x)=\sum_{k\in\sZ}\frac{1}{x+k\pi}$.

	By definition, $K_m$ is piecewise continuous for any $-1<m<1$. In $[-\frac{1}{2}, \frac{1}{2}]$, its only possible singularity is at $z=0$. For $-1<m<1, m\neq 0$, $|K_m(z)|$ has the leading order of $|z|^{-m}$ as $z\to0$. For $m=0$, $|K_m(z)|$ has the leading order of $\log|z|$ as $z\to 0$. Hence $K_m\in L^1$ for any $-1<m<1$ and $K_m\in L^2$ for any $-1<m\leq 0$.
	
	Now an integration by parts completes the proof:
    \begin{align*}
        I_m[h]
        &= \frac{1}{2}\int_{\Omega}h(x)\int_{\Omega}\sum_{k\in\sZ}\frac{x-y+k}{\abs{x-y+k}^{m+2}}h_x(y)\diff{y}\diff{x}\\
        &= -\frac{1}{2}\int_{\Omega}h(x)\int_{\Omega}K'_m(x-y) h_x(y)\diff{y}\diff{x}\\
        &= \frac{1}{2}\int_{\Omega}\int_{\Omega}K_m(x-y) \rho(x)\rho(y)\diff{y}\diff{x}.
    \end{align*}
\end{proof}

Now we prove the existence of the energy minimizer in the solution space $X$ by the direct method in the calculus of variations.

\begin{proof}[Proof of Theorem \ref{thm..existence}]
    1. Let $E_*:=\inf_{h\in X}E[h]$. Note that $E_* \leq E[Ax]=\eps^{1-m}\Phi_{m,n}(A)<+\infty$. 

    2. Applying Young's convolution inequality to the kernel-based representation in Proposition \ref{prop..kernel-basedRepresentation}, we have $-I_m[h]\geq -\|K_m\|_{L^1}\|\rho\|_{L^2}^2$.
    There exists a constant $C=C(m,n)$ such that for any $\xi\in\sR$, we have
    \begin{equation*}
        \Phi_{m,n}(\xi)\geq \frac{\gamma}{2n(n+1)}\xi^{n+1}-C.
    \end{equation*}
    For any $h\in X$, we obtain a lower bound
    \begin{equation*}
        E[h]\geq -\|K_m\|_{L^1}\|\rho\|_{L^2}^2
        +\frac{\eps^{1-m}\gamma}{2n(n+1)}\norm{\rho}^{n+1}_{L^{n+1}}-C\eps^{1-m}
        \geq C\norm{\rho}^{2}_{L^2}-C,
    \end{equation*}
    where the constants $C=C(m,n,\eps,A)$ in the last line.
    This gives us the coercivity of $E$. Also, we have $E_*\geq -C$ is finite. 

    3. Now we show $E$ is lower semicontinuous in $h$. Note that $\Phi''(\xi)=\xi^{m-1}+\gamma \xi^{n-1}>0$ for $\xi>0$. Thanks to the convexity of $\Phi(\cdot)$, the local term energy $\eps^{1-m}\int_{\Omega}\Phi(h_x)\diff{x}$ is lower semicontinuous. It remains to show the continuity of the non-local term. We use the kernel-based representation in Proposition \ref{prop..kernel-basedRepresentation} to show this. If $\{\tilde{h}^k\}_{k=1}^\infty\subset H^1_\#(\Omega)$ weakly convergence to $\tilde{h}_*\in H^1_\#(\Omega)$, then $\rho^k$ converges to $\rho_*$ weakly in $L^2(\Omega)$. Since $\rho^k,\rho_*\in L^2$ and $K_m\in L^1$, Young's convolution inequality leads to $\rho^k\ast K_m\in L^2,\,\,\rho_*\ast K_m\in L^2$.
    Thus
    \begin{align*}
        \lim_{k\to+\infty}\left\{I_m[h^k]-I_m[h_*]\right\}
        &= \lim_{k\to+\infty}\int_{\Omega}\left(\rho^k(x)-\rho_*(x)\right)\left[\rho^k\ast K_m\right](x)\diff{x}\\
        &~~~~+\lim_{k\to+\infty}\int_{\Omega}\rho_*(x)\left[(\rho^k-\rho_*)\ast K_m \right](x)\diff{x}
        = 0.
    \end{align*}
    Therefore, we have the lower semicontinuity: $\lim\inf_{k\to+\infty}E[h^k]\geq E[h_*]$.

    4. Take any minimizing sequence $\{h^k\}_{k=1}^\infty$. Then $\lim_{k\to+\infty}E[h^k]=E_*$. By coercivity, we conclude that $\sup_{k}\norm{\rho^k}_{L^2}<\infty$. Since we are in the periodic setting, we have $\sup_k\norm{\tilde{h}}_{L^2}<\infty$. Hence $\{\tilde{h}^k\}_{k=1}^{\infty}$ is bounded in $H^1_\#(\Omega)$.

    Consequently, there exist a subsequence, still denoted as $\{h^k\}_{k=1}^\infty$, and a function $h_*$ such that $\tilde{h}_*\in H^1$ and $\tilde{h}^k$ converges to $\tilde{h}_*$ weakly in $H^1_\#(\Omega)$.  Note that $X$ is a convex, closed subset of $V+\{Ax\}$. Then $X$ is weakly closed due to Mazur's Theorem. Thus $h_*\in X$.
    By the lower semicontinuity, $E[h_*]\leq \lim\inf_{k\to+\infty}E[h^k]=E_*$. Therefore, $h_*\in X$ is an energy minimizer.
\end{proof}

\begin{proof}[Proof of Theorem \ref{thm..sysmmetry}]
    It is clear that the rearrangement of the derivative keeps the local term:
    $
        \eps^{1-m}\int_{\Omega} \Phi_{m,n}(\rho)\diff{x}=\eps^{1-m}\int_{\Omega} \Phi_{m,n}(\rho^*)\diff{x}.
    $
    The Riesz's rearrangement inequality in Appendix \ref{sec:appendix.Riesz.rearrangement} with the kernel-based representation in Proposition \ref{prop..kernel-basedRepresentation} leads to
    \begin{equation}
        I_m[h]\leq I_m[h^*_\mathrm{der}]
    \end{equation}
    We further $h^*_\mathrm{der}\in X$ is the antiderivative of $\rho^*$ satisfying $h^*_\mathrm{der}(0)=h(0)$.
    Therefore $E[h]\geq E[h^*_\mathrm{der}]$. The equality holds when $h=h^*_\mathrm{der}$ up to a translation. 
    As a result, the support of $\rho$ in the period $\Omega$ is a closed interval centered at zero, i.e., $\mathrm{supp}(\rho):=\{x\in\Omega\mid\overline{\{\rho(x)\neq0\}}=[-R_0,R_0]$ for some $R_0\in(0,\frac{1}{2}]$. 
\end{proof}

\begin{prop}[Euler--Lagrange equation]\label{prop..first.variation}
    If $A>0$, $\eps>0$, and $-1<m<1<n$, then for the global energy minimizer $h\in X$ of $E$ with $\mathrm{supp}(h_x)=[-R_0,R_0]$ we have
    \begin{equation}\label{eq..Lagrange.multiplier}
        -K_{m}\ast\rho (x)+\eps^{1-m}\Phi'_{m,n}(\rho(x))+\lambda=0,\quad x\in [-R_0,R_0],
    \end{equation}
    where $\lambda$ is a Lagrange multiplier.
    The Euler--Lagrange equation holds in the weak sense
    \begin{equation}\label{eq..elliptic.PDE}
        (K_{m}\ast h_x)_x-\eps^{1-m}\left(\frac{1}{h_x^{1-m}}+\gamma h_x^{n-1}\right)h_{xx}=0,\quad x\in [-R_0,R_0],
    \end{equation}
    Moreover, if $-1<m\leq 0$, then there further exist constants $\rho_{\min}$ and $\rho_{\max}$ such that $0<\rho_{\min}\leq \rho(x)\leq \rho_{\max}<+\infty$ for any $x\in [-R_0,R_0]$.
\end{prop}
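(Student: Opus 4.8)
The plan is to extract the first-order optimality condition from the constrained minimization and then to turn it into pointwise bounds on $\rho$. Since $E$ depends on $h$ only through $\rho=h_x$, Proposition \ref{prop..kernel-basedRepresentation} reduces the problem to minimizing $\rho\mapsto-\tfrac12\int_\Omega\int_\Omega\rho(x)\rho(y)K_m(x-y)\diff x\diff y+\eps^{1-m}\int_\Omega\Phi_{m,n}(\rho)\diff x$ over $\{\rho\ge0,\ \int_\Omega\rho=A\}$. By Theorem \ref{thm..sysmmetry} we may take $\rho=\rho^*$, so $\rho$ is even and nonincreasing on $[0,\tfrac12]$ with $\{\rho>0\}=(-R_0,R_0)$; hence on each $[-R_0+\delta,R_0-\delta]$ one has $\rho\ge\rho(R_0-\delta)=:c_\delta>0$. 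This local positivity in the interior of the support is what licenses two-sided variations.

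First I would compute the first variation. For $\eta\in C_c^\infty((-R_0,R_0))$ and $|t|$ small, $h+t\eta\in X$ since $h_x+t\eta_x\ge0$ on $\mathrm{supp}\,\eta$ (where $\rho\ge c_\delta$) and is unchanged elsewhere; the normalization $\int_\Omega\tilde h=0$ is immaterial because $E$ sees only $\rho$. Differentiating at $t=0$, using the symmetry of $K_m$ for the nonlocal term and convexity of $\Phi_{m,n}$ (monotone difference quotients and dominated convergence, legitimate because $\rho\ge c_\delta$ on $\mathrm{supp}\,\eta$) for the local term, yields
\[
0=\int_{-R_0}^{R_0}\bigl(-K_m\ast\rho+\eps^{1-m}\Phi_{m,n}'(\rho)\bigr)\eta_x\diff x .
\]
Because this holds for all such $\eta$, the du Bois--Reymond lemma forces the primitive $-K_m\ast\rho+\eps^{1-m}\Phi_{m,n}'(\rho)$ to be constant on $[-R_0,R_0]$; calling the constant $-\lambda$ gives \eqref{eq..Lagrange.multiplier}, with $\lambda$ finite since every term is finite a.e. The displayed identity is at the same time the weak form of \eqref{eq..elliptic.PDE}, as $\bigl(\eps^{1-m}\Phi_{m,n}'(\rho)\bigr)_x=\eps^{1-m}\bigl(h_x^{-(1-m)}+\gamma h_x^{n-1}\bigr)h_{xx}$ by \eqref{eq..derivativePhi}.

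It remains to prove the bounds for $-1<m\le0$, and here I would exploit the extra integrability of the kernel. By Proposition \ref{prop..kernel-basedRepresentation}, $K_m\in L^2(\Omega)$ exactly when $-1<m\le0$, so Cauchy--Schwarz gives $\norm{K_m\ast\rho}_{L^\infty}\le\norm{K_m}_{L^2}\norm{\rho}_{L^2}<\infty$. Then \eqref{eq..Lagrange.multiplier} shows $\eps^{1-m}\Phi_{m,n}'(\rho(x))=K_m\ast\rho(x)-\lambda$ ranges in a bounded interval $[c_1,c_2]$. Now $\Phi_{m,n}'$ is continuous and strictly increasing on $(0,\infty)$ (as $\Phi_{m,n}''>0$), and by \eqref{eq..derivativePhi} it satisfies $\Phi_{m,n}'(\xi)\to-\infty$ as $\xi\to0^+$ and $\Phi_{m,n}'(\xi)\to+\infty$ as $\xi\to+\infty$; it is therefore a bijection of $(0,\infty)$ onto $\sR$. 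Inverting, $\rho(x)=(\Phi_{m,n}')^{-1}\!\bigl(\eps^{-(1-m)}(K_m\ast\rho(x)-\lambda)\bigr)$ lies in $[\rho_{\min},\rho_{\max}]$ with $\rho_{\min}=(\Phi_{m,n}')^{-1}(\eps^{-(1-m)}c_1)>0$ and $\rho_{\max}=(\Phi_{m,n}')^{-1}(\eps^{-(1-m)}c_2)<\infty$.

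I expect the lower bound $\rho\ge\rho_{\min}>0$ to be the main obstacle and the crux of the restriction to $-1<m\le0$. It rests on two facts available only in this range: that $K_m\in L^2$, which upgrades $K_m\ast\rho$ to an $L^\infty$ function, and that $\Phi_{m,n}'(0^+)=-\infty$ (from $\tfrac1m\xi^m$ with $m<0$, or $\log\xi$ at $m=0$), which keeps $\rho$ away from $0$. For $0<m<1$ one has instead $\Phi_{m,n}'(0^+)=0$, so a bounded right-hand side no longer excludes $\rho=0$; this is consistent with Theorem \ref{thm..regularity} being asserted only for $-1<m\le0$.
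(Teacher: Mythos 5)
Your proposal is correct and follows essentially the same route as the paper: a direct first variation with test functions supported in $[-R_0,R_0]$ (your du Bois--Reymond step is just a slightly more explicit way of producing the multiplier $\lambda$ from the constraint $\int_\Omega\rho\,\diff{x}=A$), followed, for $-1<m\le 0$, by the identical chain $K_m\in L^2\Rightarrow \norm{K_m\ast\rho}_{L^\infty}<\infty\Rightarrow \Phi_{m,n}'(\rho)$ bounded $\Rightarrow$ two-sided bounds on $\rho$ from the explicit form \eqref{eq..derivativePhi}. One small correction to a side remark: since $K_m(z)\sim|z|^{-m}$ near $z=0$, one has $K_m\in L^2(\Omega)$ for all $-1<m<\tfrac12$, so ``exactly when $-1<m\le0$'' overstates Proposition \ref{prop..kernel-basedRepresentation}; this is harmless because your argument (like the paper's) only uses the implication $-1<m\le 0\Rightarrow K_m\in L^2$, and the genuine obstruction for $0<m<1$ is, as you say, that $\Phi_{m,n}'(0^+)$ is finite.
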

\begin{proof}
	Eq. \eqref{eq..Lagrange.multiplier} is obtained by direct calculation of $\frac{\delta E}{\delta \rho}$ in the standard way. The appearance of a Lagrange multiplier $\lambda$, which is due to the constraint $\int_{\Omega}\rho\diff{x}=A$. Note that $(\Phi'_{m,n}(\rho))_x=(\Phi''_{m,n}(h_x))h_{xx}=\left(\frac{1}{h_x^{1-m}}+\gamma h_x^{n-1}\right)h_{xx}$. By using test functions vanishing on $\Omega\backslash[-R_0,R_0]$, one can obtain the Euler--Lagrange equation \eqref{eq..elliptic.PDE} by direct calculation of $\frac{\delta E}{\delta h_x}$. For $-1<m\leq 0$, Young's convolution inequality with $\rho,K_m\in L^2$ leads to $\norm{K_{m}\ast\rho}_{L^\infty}\leq \norm{K_{m}}_{L^2}\norm{\rho}_{L^2}<+\infty$. Hence $\Phi'_{m,n}(\rho(x))$ is bounded on $[-R_0,R_0]$. This with \eqref{eq..derivativePhi} implies $\rho(x)$, $x\in[-R_0,R_0]$ is bounded by some positive constants $\rho_{\min}$ and $\rho_{\max}$ from below and above.
\end{proof}

This proposition allows us to prove the regularity of the minimizer for $-1<m\leq 0$.

\begin{proof}[Proof of Theorem \ref{thm..regularity}]
    By Proposition \ref{prop..first.variation}, we have the weak solution of \eqref{eq..elliptic.PDE}. Since $0<\rho_{\min}\leq \rho(x)\leq \rho_{\max}<+\infty$ for any $x\in [-R_0,R_0]$, we have $C\leq \frac{1}{h_x^{1-m}}+\gamma h_x^{n-1}\leq C'$ for some positive constants $C$ and $C'$. Thus \eqref{eq..elliptic.PDE} is an elliptic partial differential equation with uniform ellipticity condition. Therefore the standard elliptic regularity theory implies that $h$ is smooth.
\end{proof}

\section{Minimum energy scaling law of the energy minimizer}
In this section, we characterize the energetic induced step bunching of the generalized continuum model by showing its non-trivial minimum energy scaling law. In particular, we achieve matching upper and lower bounds for the minimum energy. The upper bound is obtained by a good ansatz, while the lower bound is due to a new interpolation inequality which is based on a specific formulation of the non-local energy.

The total energy has the following representation. In particular, the non-local term $\tilde{I}_m$ equals the previous non-local energy $I_m$ up to a null Lagrangian.
\begin{prop}[Another formulation of the total energy]\label{prop..nullLagrangian}
	If $A>0$, $\eps>0$, and $-1<m<1<n$, then for any $h\in X$ we have another representation of the total energy,
	\begin{align}
	    E[h]
	    &= -\tilde{I}_m[h]-W[h]+\eps^{1-m}\int_{\Omega} \Phi_{m,n}(h_x)\diff{x},\\
	    \tilde{I}_m[h]
	    &:= \frac{1}{2}\int_{\Omega}\tilde{h}(x)\mathrm{P.V.}\int_{\sR}\frac{(x-y)\tilde{h}_x(y)}{\abs{x-y}^{m+2}}\diff{y}\diff{x},\\
		W[h]
	    &:= \frac{A}{2}\norm{\rho\ast K_m}_{L^1}.
	\end{align}
	Here $W[h]$ is a null Lagrangian, i.e., $\frac{\delta W}{\delta h}=0$ for all $h\in X$.
\end{prop}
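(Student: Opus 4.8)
The plan is to reduce everything to a single scalar identity. By definition $E[h]=-I_m[h]+\eps^{1-m}\int_{\Omega}\Phi_{m,n}(h_x)\diff{x}$, and since the local term is literally the same in both formulations, the stated representation is equivalent to the equality $I_m[h]=\tilde I_m[h]+W[h]$, after which it only remains to check that $W$ is a null Lagrangian. The idea is to express $\tilde I_m$ through the very same periodic kernel $K_m$ that Proposition~\ref{prop..kernel-basedRepresentation} attaches to $I_m$; the two quantities differ only because $\tilde I_m$ is built from the genuinely $1$-periodic profile $\tilde h$, whereas $h$ carries the jump $h(\tfrac12)-h(-\tfrac12)=A$ across one period.

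First I would rewrite the inner principal-value integral. Folding the integral over $\sR$ into one period using the periodicity of $\tilde h_x$ and the definition of $K_m$ from the proof of Proposition~\ref{prop..kernel-basedRepresentation}, one obtains
\[
\mathrm{P.V.}\int_{\sR}\frac{(x-y)\tilde h_x(y)}{\abs{x-y}^{m+2}}\diff{y}=-\bigl(K_m\ast\tilde h_x\bigr)'(x),
\]
where $\ast$ denotes periodic convolution on $\Omega$. Plugging this into $\tilde I_m[h]$ and integrating by parts in $x$, the boundary terms vanish because both $\tilde h$ and $K_m\ast\tilde h_x$ are $1$-periodic, leaving the clean quadratic form $\tilde I_m[h]=\tfrac12\int_{\Omega}\tilde h_x\,(K_m\ast\tilde h_x)\diff{x}$. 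I would then insert $\tilde h_x=\rho-A$ and expand, using $\int_{\Omega}\rho\diff{x}=A$, the identity $\int_{\Omega}(K_m\ast\rho)\diff{x}=\bigl(\int_{\Omega}K_m\bigr)A$ (Fubini together with periodicity of $K_m$), and Proposition~\ref{prop..kernel-basedRepresentation} in the form $\int_{\Omega}\rho\,(K_m\ast\rho)\diff{x}=2I_m[h]$. The cross term and the constant term collapse, yielding
\[
\tilde I_m[h]=I_m[h]-\tfrac{A^2}{2}\int_{\Omega}K_m(z)\diff{z},\qquad\text{i.e.}\qquad I_m[h]-\tilde I_m[h]=\tfrac{A^2}{2}\int_{\Omega}K_m(z)\diff{z}.
\]

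It then remains to identify this constant with $W[h]$. Since $\rho\ge0$ and $K_m\ge0$ by Proposition~\ref{prop..kernel-basedRepresentation}, the convolution $\rho\ast K_m$ is nonnegative, so the absolute value in the $L^1$ norm disappears and $\norm{\rho\ast K_m}_{L^1}=\int_{\Omega}(\rho\ast K_m)\diff{x}=\bigl(\int_{\Omega}K_m\bigr)A$; hence $W[h]=\tfrac{A}{2}\bigl(\int_{\Omega}K_m\bigr)A=\tfrac{A^2}{2}\int_{\Omega}K_m$, which matches the constant above and establishes the representation. This same computation shows that $W[h]$ depends on $h$ only through the fixed quantity $\int_{\Omega}\rho\diff{x}=A$; therefore $W$ is constant on $X$, and for any admissible variation (periodic and mean-zero, so that $\int_{\Omega}(\delta h)_x=0$) its value is unchanged, giving $\frac{\delta W}{\delta h}=0$. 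Thus $W$ is a null Lagrangian.

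The hard part is the rigorous treatment of the singular kernel rather than the algebra: justifying the folding of the principal-value integral into $-(K_m\ast\tilde h_x)'$ and the differentiation-under-the-integral and integration-by-parts across the integrable singularity of $K_m$ at $z=0$, where $\abs{K_m(z)}\sim\abs{z}^{-m}$ for $m\neq0$ and $\sim\abs{\log\abs{z}}$ for $m=0$. The one genuinely load-bearing structural point is that the boundary terms vanish for $\tilde I_m$ \emph{precisely because} $\tilde h$ is periodic, while the same steps applied to $I_m$ would instead pick up the jump $A$; keeping track of this asymmetry is exactly what produces the constant $W$ rather than $0$.
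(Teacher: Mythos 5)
Your argument tracks the paper's own proof almost step for step: both fold the nonlocal integral into the periodic kernel $K_m$ of Proposition \ref{prop..kernel-basedRepresentation}, both exploit $\rho-\tilde{h}_x=A$ together with $\int_\Omega\rho\diff{x}=A$ and Fubini, and both use nonnegativity of $\rho$ and $K_m$ to drop the absolute value in the $L^1$ norm. The differences are minor: you pass through the symmetric form $\tilde{I}_m[h]=\tfrac12\int_\Omega\int_\Omega\tilde{h}_x(x)\tilde{h}_x(y)K_m(x-y)\diff{x}\diff{y}$ and expand $\tilde{h}_x=\rho-A$, while the paper derives the mixed form $\tfrac12\int_\Omega\int_\Omega\tilde{h}_x(x)\rho(y)K_m(x-y)\diff{x}\diff{y}$ and obtains $W$ in one line; and for the null-Lagrangian claim the paper computes a first variation and kills it using periodicity of $K_m$, whereas you observe that $W\equiv\tfrac{A^2}{2}\norm{K_m}_{L^1}$ on all of $X$, which is a stronger and cleaner statement.

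However, your proof inherits, at the identical step, a genuine defect of the paper's argument, and your closing paragraph comes within a hair of catching it. Both you and the paper invoke Proposition \ref{prop..kernel-basedRepresentation} in the form $2I_m[h]=\int_\Omega\rho\,(K_m\ast\rho)\diff{x}$. That identity is false as stated: for $h(x)=Ax\in X$ one has $\tilde{h}_x\equiv 0$, so the inner principal-value integral in the definition of $I_m$ vanishes identically and $I_m[Ax]=0$, while $\tfrac12\int_\Omega\int_\Omega A^2K_m(x-y)\diff{x}\diff{y}=\tfrac{A^2}{2}\norm{K_m}_{L^1}>0$. The culprit is exactly the asymmetry you name at the end: integrating by parts in $x$ against the non-periodic $h$ (jump $h(\tfrac12)-h(-\tfrac12)=A$) produces a boundary term $A\,K_m(\tfrac12-y)$, which the proof of Proposition \ref{prop..kernel-basedRepresentation} silently discards. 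Keeping it yields $I_m[h]=\tfrac12\int_\Omega\int_\Omega K_m(x-y)\rho(x)\rho(y)\diff{x}\diff{y}-\tfrac{A}{2}(K_m\ast\rho)(\tfrac12)$, hence $I_m[h]-\tilde{I}_m[h]=\tfrac{A}{2}\bigl[\norm{\rho\ast K_m}_{L^1}-(\rho\ast K_m)(\tfrac12)\bigr]$; this is not the constant $\tfrac{A^2}{2}\norm{K_m}_{L^1}$ and genuinely depends on $h$ (compare a bunch centered at $x=0$, where $(K_m\ast\rho)(\tfrac12)\approx0$, with one centered at $x=\tfrac12$). So the exact identity asserted in the proposition needs this correction term, in your write-up and in the paper alike; had you actually carried out the integration by parts on $I_m$ that you allude to, instead of citing Proposition \ref{prop..kernel-basedRepresentation}, you would have found it. The damage is contained: the null-Lagrangian claim for $W$ as defined is fine (it is constant on $X$, as you show), and the downstream use of this proposition in Proposition \ref{prop..I_m.upper.bound} and Theorem \ref{thm..energy.scaling.law} needs only the one-sided bound $I_0[h]-\tilde{I}_0[h]\leq\tfrac{A^2}{2}\norm{K_0}_{L^1}$, which survives because $(K_0\ast\rho)(\tfrac12)\geq0$. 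But as proofs of the stated identity, both arguments fail at this same step.
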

\begin{proof}
	Similar to the same proof of Proposition \ref{prop..kernel-basedRepresentation}, we have
	\begin{equation}
      \tilde{I}_m[h]
      =\frac{1}{2}\int_{\Omega}\int_{\Omega} \tilde{h}_x(x)\rho(y)K_m(x-y)\diff{x}\diff{y}.
    \end{equation}
    Note that $\rho(x)-\tilde{h}_x(x)=A$. Thus
	\begin{align*}
		W[h]=I_m[h]-\tilde{I}_m[h]
		&=\frac{1}{2}\int_{\Omega}\int_{\Omega} (\rho(x)-\tilde{h}_x(x))\rho(y)K_m(x-y)\diff{x}\diff{y}\\
		&=\frac{A}{2}\int_{\Omega}\rho\ast K_m(x)\diff{x}=\frac{A}{2}\norm{\rho\ast K_m}_{L^1}.
	\end{align*}
	In the last step, we use the fact that $K_m$ and $\rho$ are both non-negative on $\Omega$. 
	Let $f(\cdot)$ be a smooth perturbation. Then we have
	\begin{align*}
		\left\langle\frac{\delta W}{\delta h},f\right\rangle
		&=\frac{A}{2}\int_{\Omega}\int_{\Omega}K_m(x-y)f_x(y)\diff{y}\diff{x}\\
		&=\frac{A}{2}\int_{\Omega}\left[\int_{\Omega}K'_m(x-y)\diff{x}\right]f(y)\diff{y}=0.
	\end{align*}
	Hence $W[h]$ is a null Lagrangian.
\end{proof}

We provide an equivalent formulation for the non-local energy, up to the null Lagrangian.

\begin{prop}[Fourier series-based representation]\label{prop..EquivalentFormulationNonlocalEnergy}
    If $A>0$, $-1<m<1$, and $h\in X$ is piecewise $C^2$, then we have
    \begin{equation*}
      \tilde{I}_m[h]
      =S_m\sum\limits_{k\in\sZ}|k|^{m+1}\abs{h_{k}}^2,
    \end{equation*}
    where $h_k:=\int_{\Omega}\tilde{h}(x)\E^{-2\pi\I kx}\diff{x}$ is the Fourier coefficients of $\tilde{h}$, and the constant $S_m:=(2\pi)^{m+1}\int_{0}^{+\infty}z^{-m-1}\sin z\diff{z}$.
\end{prop}
\begin{proof}
    We define $d(x)=\displaystyle\sum\limits_{k\in\sZ}\frac{x+k}{\abs{x+k}^{m+2}}$ and consider the Fourier series
    \begin{equation*}
      \tilde{h}(x)
      = \sum_{k\in\sZ}h_{k}\E^{2\pi \I k x},\quad
      d(x)
      = \sum_{k\in\sZ}d_{k}\E^{2\pi \I k x}.
    \end{equation*}
    Clearly, we have $h_k=\overline{h_{-k}}$ and
    $
      \tilde{h}_x(x)=\sum_{k\in\sZ} 2\pi \I k h_{k}\E^{2\pi \I k x}
    $. By Proposition \ref{prop..relation.two.definition.non-local} and properties of Fourier series, we have
    \begin{align*}
      \tilde{I}_m[h]
      &= \frac{1}{2}\int_{\Omega}\tilde{h}(x)\int_{\Omega} d(y)\tilde{h}_x(x-y)\diff{y}\diff{x}\\
      &= \frac{1}{2}\int_{\Omega}\tilde{h}(x)\int_{\Omega}\sum\limits_{k\in\sZ}d_{k}\E^{2\pi \I k y}\sum\limits_{k'\in\sZ}2\pi \I k'h_{k'}\E^{2\pi \I k'(x-y)}\diff{y}\diff{x}\\
      &= \frac{1}{2}\int_{\Omega} \tilde{h}(x)\sum\limits_{k\in\sZ}2\pi \I k d_{k} h_{k}\E^{2\pi \I k x}\diff{x}\\
      &= \frac{1}{2}\int_{\Omega}\sum\limits_{k'\in\sZ}h_{-k'}\E^{-2\pi \I k' x}\sum\limits_{k\in\sZ}2\pi \I k d_{k} h_{k}\E^{2\pi \I k x}\diff{x}\\
      &= \sum\limits_{k\in\sZ}\pi \I k h_{-k}d_{k} h_{k}
      = \sum\limits_{k\in\sZ}\pi \I k d_{k}\abs{h_{k}}^2.
    \end{align*}
    Note that for $k\in\sZ$
    \begin{equation*}
      d_{k}
      = \int_{\Omega}\sum\limits_{k'\in\sZ}\frac{x+k'}{\abs{x+k'}^{m+2}}\E^{-2\pi \I k x}\diff{x}
      = \int_{\sR}\frac{x}{\abs{x}^{s+2}}\E^{-2\pi \I k x}\diff{x}.
    \end{equation*}
    Thus for $k>0$, we have
    \begin{align*}
        \pi \I k d_{k}+\pi \I (-k) d_{-k}
        &= \int_{\sR}\frac{\pi \I k x}{\abs{x}^{m+2}}\E^{-2\pi \I k x}+\frac{-\pi \I k x}{\abs{x}^{m+2}}\E^{2\pi \I k x}\diff{x}\\
        &= \int_{\sR}\frac{2\pi k x\sin(2\pi k x)}{\abs{x}^{m+2}}\diff{x}\\
        &= (2\pi k)^{m+1}\int_{\sR}\frac{2\pi k x\sin(2\pi k x)}{\abs{2\pi k x}^{m+2}}\diff{(2\pi k x)}\\
        &= 2(2\pi k)^{m+1}\int_{0}^{+\infty}z^{-m-1}\sin z\diff{z}\\
        &= 2 k^{m+1} S_m.
    \end{align*}
    Therefore, we have
    $\tilde{I}_m[h]
      = \sum_{k\in\sZ}\pi \I k d_{k}\abs{h_{k}}^2
      = S_m\sum_{k\in\sZ}|k|^{m+1}\abs{h_{k}}^2
    $.
\end{proof}

Recall Remark \ref{rmk..fractional.Laplacian} that the non-local intergal is equivalent to fractional Laplacian for sufficently smooth functions. With this perspective, Proposition \ref{prop..EquivalentFormulationNonlocalEnergy} tells us that the non-local energy is equivalent to the square of $H^{\frac{1+m}{2}}$ semi-norm of $\tilde{h}$ with $-1<m<1$.
To obtain an optimal estimate for the lower bound of the minimum energy in the case of $m=0$, we prove the following interpolation inequality which can be regarded as a quantitative extension of the Sobolev imbedding inequality for $H^{\frac{1}{2}}$ functions imbedding into $L^2$ space.
\begin{prop}[interpolation inequality]\label{prop..I_m.upper.bound}
    If $A>0$, $\eps>0$, $m=0$, and $1<n$, then for the energy minimizer $h\in X$ of $E$ with $\rho=h_x$, then we have
    \begin{align*} 
        \tilde{I}_0[h]
        &\leq \frac{2}{N}\norm{h_x}_{L^2}^2+3A^2+A^2\log N,\\
		\tilde{I}_0[h]
        &\leq \frac{2}{N}\norm{h_x}_{L^2}^2+4A^2+A^2\log N.
    \end{align*}
\end{prop}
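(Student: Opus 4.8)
The plan is to combine the Fourier series representation of $\tilde{I}_0$ (Proposition \ref{prop..EquivalentFormulationNonlocalEnergy}) with the two structural facts that single out an admissible profile $h\in X$: the step density $\rho=h_x$ is nonnegative and has fixed total mass $\int_\Omega\rho\diff{x}=A$. Since $m=0$ lies in the range $-1<m\le 0$, Theorem \ref{thm..regularity} guarantees the minimizer is smooth on $[-R_0,R_0]$ and constant off it, hence piecewise $C^2$, so Proposition \ref{prop..EquivalentFormulationNonlocalEnergy} applies and
\[
    \tilde{I}_0[h]=S_0\sum_{k\in\sZ}\abs{k}\,\abs{h_k}^2,\qquad S_0=(2\pi)\int_0^\infty z^{-1}\sin z\diff{z}=\pi^2.
\]
First I would record two elementary consequences of the expansion $\tilde{h}_x=\sum_{k\in\sZ}2\pi\I k\,h_k\E^{2\pi\I kx}$. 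By Parseval, $4\pi^2\sum_{k\in\sZ}k^2\abs{h_k}^2=\norm{\tilde{h}_x}_{L^2}^2\le\norm{h_x}_{L^2}^2$. Moreover, since $\rho-A=\tilde{h}_x$, the $k$-th Fourier coefficient of $\rho$ equals $\hat\rho_k=2\pi\I k\,h_k$ for $k\ne 0$; nonnegativity of $\rho$ then gives $2\pi\abs{k}\,\abs{h_k}=\abs{\hat\rho_k}\le\int_\Omega\rho\diff{x}=A$, that is, $\abs{h_k}\le A/(2\pi\abs{k})$.

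Next I would split the frequency sum at level $N$. For the high modes $\abs{k}>N$ I use $\abs{k}\le k^2/N$ to bound their contribution by $\tfrac{S_0}{N}\sum_{k\in\sZ}k^2\abs{h_k}^2\le\tfrac{1}{4N}\norm{h_x}_{L^2}^2$, which sits comfortably below the advertised $\tfrac{2}{N}\norm{h_x}_{L^2}^2$. For the low modes $1\le\abs{k}\le N$ I insert the pointwise bound $\abs{k}\,\abs{h_k}^2\le A^2/(4\pi^2\abs{k})$ and sum the resulting harmonic series, $\sum_{1\le\abs{k}\le N}\abs{k}^{-1}=2\sum_{k=1}^N k^{-1}\le 2(1+\log N)$, so that after multiplying by $S_0=\pi^2$ the low-frequency part is at most $\tfrac{A^2}{2}(1+\log N)$.

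Summing the two pieces yields $\tilde{I}_0[h]\le\tfrac{1}{4N}\norm{h_x}_{L^2}^2+\tfrac{A^2}{2}+\tfrac{A^2}{2}\log N$, which is in fact stronger than both displayed inequalities; rounding the constants up (and absorbing the harmonic-sum and boundary-index bookkeeping into the additive $3A^2$ or $4A^2$, according to whether one replaces $\norm{\tilde h_x}_{L^2}$ by $\norm{h_x}_{L^2}$ and how the cutoff at $\abs{k}=N$ is counted) gives the stated bounds. The one genuinely non-routine step is the low-frequency estimate: it is precisely the nonnegativity of $\rho$ together with the fixed mass $A$ that caps \emph{every} Fourier coefficient uniformly by $A$, thereby forcing the borderline-divergent sum $\sum\abs{k}^{-1}\sim\log N$ rather than a power of $N$. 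This is the quantitative manifestation of the critical $H^{1/2}\hookrightarrow L^2$ embedding, and it is exactly what will supply the $\abs{\log\eps}$ in the minimum energy scaling law (Theorem \ref{thm..energy.scaling.law}) once $N$ is optimized against $\eps$.
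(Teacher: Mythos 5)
Your proof is correct and follows the same overall strategy as the paper's: the Fourier-series representation from Proposition \ref{prop..EquivalentFormulationNonlocalEnergy} (applicable to the minimizer by the regularity of Theorem \ref{thm..regularity}), a frequency split at $N$, a uniform bound on the low-mode coefficients coming from $\rho\geq 0$ and $\int_\Omega\rho\diff{x}=A$, and Parseval for the high modes. The differences are in the coefficient bounds, and they matter. Where the paper estimates $\max_k\abs{2\pi k h_k}\leq\norm{\tilde{h}_x}_{L^1}\leq 2A$ and $\norm{\tilde{h}_x}_{L^2}^2\leq 2(\norm{h_x}_{L^2}^2+A^2)$, you use the identity $2\pi\I k h_k=\hat{\rho}_k$ for $k\neq 0$ to get $\abs{2\pi k h_k}\leq\norm{\rho}_{L^1}=A$, and the exact relation $\norm{\tilde{h}_x}_{L^2}^2=\norm{h_x}_{L^2}^2-A^2$. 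Your sharper low-mode bound in fact repairs a small slip in the paper's bookkeeping: since the low-frequency sum runs over both signs of $k$, one has $\sum_{1\leq\abs{k}\leq N}\abs{k}^{-1}\leq 2(1+\log N)$, so the paper's own constants yield $2A^2(1+\log N)$ for that piece rather than the claimed $A^2(1+\log N)$; with your bound $\abs{2\pi k h_k}\leq A$ the same piece is at most $\tfrac{A^2}{2}(1+\log N)$, and the stated inequality holds with room to spare.

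One caveat. As the paper's own proof makes clear (and as the application in Theorem \ref{thm..energy.scaling.law} requires), the second displayed inequality is intended for $I_0[h]=\tilde{I}_0[h]+W[h]$ rather than for $\tilde{I}_0[h]$ again — the repeated $\tilde{I}_0[h]$ in the statement is evidently a typo. That version needs the additional step $W[h]=\frac{A}{2}\norm{\rho\ast K_0}_{L^1}\leq\frac{A}{2}\norm{\rho}_{L^1}\norm{K_0}_{L^1}\leq\frac{A^2}{2}$ (Young's inequality, using Proposition \ref{prop..nullLagrangian}), which your argument omits since you proved the statement as written. Appending that single line to your bound $\tilde{I}_0[h]\leq\frac{1}{4N}\norm{h_x}_{L^2}^2+\frac{A^2}{2}(1+\log N)$ immediately gives $I_0[h]\leq\frac{2}{N}\norm{h_x}_{L^2}^2+4A^2+A^2\log N$, which is the inequality the energy scaling law actually uses.
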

\begin{proof}
	Thanks to Theorem \ref{thm..regularity}, $h$ is piecewise $C^2$ in the case for $m=0$. Hence we apply Proposition \ref{prop..EquivalentFormulationNonlocalEnergy} with $S_0=2\pi\int_0^{+\infty}\frac{\sin z}{z}\diff{z}=\pi^2$ and split the sum into two parts:
    \begin{align*}
        \tilde{I}_0[h]
        &=\pi^2\sum_{k\leq N}\abs{k}\abs{h_k}^2+\pi^2\sum_{k> N}\abs{k}\abs{h_k}^2\\
        &\leq \frac{1}{4}(\max_{k\in\sZ}{\abs{2\pi k h_k}})^2\sum_{k\leq N}\abs{k}^{-1}+\frac{1}{4}\sum_{k> N}\abs{k}^{-1}\abs{2\pi k h_k}^2.
    \end{align*}
    For the second term, we have 
    \begin{equation}
    	\sum_{k> N}\abs{k}^{-1}\abs{2\pi k h_k}^2
    	\leq \frac{1}{N}\norm{\tilde{h}_x}^2_{L^2}
		\leq\frac{2}{N}(\norm{h_x}_{L^2}^2+A^2)
    	\leq \frac{2}{N}\norm{h_x}_{L^2}^2+2A^2.\label{eq..interpolationPart1}
	\end{equation}
   
    By the property of the Fourier series, we have
    \begin{align*}
    	\max_{k\in\sZ}{\abs{2\pi k h_k}}
    	=\max_{k\in\sZ}\Abs{\int_{\Omega}\tilde{h}_x(x)\E^{-2\pi\I kx}\diff{x}}
    	\leq \norm{\tilde{h}_x}_{L^1}=\norm{h_x}_{L^1}+\norm{A}_{L^1}\leq 2A.
    \end{align*}
    Hence 
    \begin{equation}
	    \frac{1}{4}(\max_{k\in\sZ}{\abs{2\pi k h_k}})^2\sum_{k\leq N}\abs{k}^{-1}
	    \leq\frac{1}{4}(2A)^2(1+\log N)=A^2\log N+A^2.\label{eq..interpolationPart2}
    \end{equation}
    Then $\tilde{I}_0[h]$ is bounded by combining \eqref{eq..interpolationPart1} and \eqref{eq..interpolationPart2}.
    Recall that $I_0[h]=\tilde{I}_0[h]+W[h]$ where $W[h]=\frac{A}{2}\norm{\rho\ast K_0}_{L^1}$ and $K_0=-\log\sin(\pi z)$ for $z\in[-\frac{1}{2},\frac{1}{2}]$. By Young's convolution inequality, we have
    $W[h]\leq\frac{A}{2}\norm{\rho}_{L^1}\norm{K_0}_{L^1}\leq \frac{A^2}{2}$. This with the bound of $\tilde{I}_0[h]$ completes the proof.
\end{proof}

Now, we are ready to obtain the minimum energy scaling law for the continuum model on epitaxial growth with elasticity effects.

\begin{proof}[Proof of Theorem \ref{thm..energy.scaling.law}]
    1. We first obtain an upper bound for the minimum energy. This is obtained by considering the energy of the following height profile in one period $\Omega$
    \begin{equation*}
        h^0(x)=\left\{
        \begin{array}{ll}
            \rho_0 x, & \abs{x}\leq \frac{A}{2\rho_0},\\
            A/2, & x>\frac{A}{2\rho_0},\\
            -A/2, & x<-\frac{A}{2\rho_0}.
        \end{array}
        \right.
    \end{equation*}
    For sufficiently small $\eps$, we set $\rho_0:=\eps^{-\frac{1}{n}}\geq A$.
    Thus
    \begin{align*}
        \eps\int_{\Omega}\Phi_{0,n}(h_x)\diff{x}
        &= \eps A\rho_0^{-1}\Phi_{0,n}(\rho_0)\\
        &= \frac{\gamma A}{n(n+1)}\eps\rho_0^n+A\eps \log\rho_0\\
        &= \frac{\gamma A}{n(n+1)}-\frac{A\eps}{n} \log\eps.
    \end{align*}
    Utilizing Proposition \ref{prop..kernel-basedRepresentation}, we have for $m=0$
    \begin{align*}
        -I_0[h^0]
        &= 
        \rho_0^2\int_{-\frac{A}{2\rho_0}}^{\frac{A}{2\rho_0}}\int_{-\frac{A}{2\rho_0}}^{\frac{A}{2\rho_0}}\log\sin(\pi (x-y))\diff{x}\diff{y}\\
        &\leq 
        A^2\log\frac{A}{\rho_0}+C
        \leq 
        \frac{A^2}{n}\log\eps+C.
    \end{align*}
    Substituting these into $E[h^0]$ leads to an upper bound of the minimum energy:
    \begin{equation}
        \inf_{h\in X}E[h]\leq E[h^0]\leq -\frac{A^2}{n}\abs{\log \eps}+C.
    \end{equation}

    2. The lower bound of the energy is obtained by taking an optimal interpolation parameter $N$ in Proposition \ref{prop..I_m.upper.bound}. There exists a constant $C$ such that for any $\xi\in\sR$, we have
    \begin{equation*}
        \Phi_{0,n}(\xi)\geq \frac{\gamma}{2n(n+1)}\xi^{n+1}-C.
    \end{equation*}
    For $m=0$, setting $N=\eps^{-\frac{1}{n}}$ and applying Proposition \ref{prop..I_m.upper.bound} for any $h\in X$, we obtain the lower bound
    \begin{align*}
        E[h]
        &\geq -\frac{2}{N}\norm{h_x}_{L^2}^2-4A^2-A^2\log N+\frac{\eps\gamma}{2n(n+1)}\norm{\rho}^{n+1}_{L^{n+1}}-C\eps\\
        &\geq -2\eps^{\frac{1}{n}}\norm{\rho}_{L^2}^2-4A^2-A^2\log \eps^{-\frac{1}{n}}+\frac{\eps\gamma}{2n(n+1)}\norm{\rho}^{n+1}_{L^{n+1}}-C\eps.
    \end{align*}
    Note that for $\xi\geq 0$ the function $f(\xi):=\frac{\eps\gamma}{2n(n+1)}\xi^n-2\eps^{\frac{1}{n}}\xi$ takes minimum at $\xi_*:=(4(n+1))^{\frac{1}{n-1}}\eps^{-\frac{1}{n}}\gamma^{-\frac{1}{n-1}}$. Thus $f(\xi)\geq f(\xi_*)=C_*$ which is independent of $\eps$. Therefore
    \begin{align*}
        E[h]
        &\geq \int_{\Omega}\left[\frac{\eps\gamma}{2n(n+1)}\rho^{n+1}(x)-2\eps^{\frac{1}{n}}\rho^2(x)\right]\diff{x}-4A^2-C\eps-A^2\log \eps^{-\frac{1}{n}}\\
        &\geq \int_{\Omega}C_*\rho(x)\diff{x}-4A^2-C\eps-\frac{A^2}{n}\abs{\log \eps}\\
        &\geq -\frac{A^2}{n}\abs{\log \eps}-C.
    \end{align*}
\end{proof}

\appendix

\section{Riesz rearrangement inequality}\label{sec:appendix.Riesz.rearrangement}
We introduce the Riesz rearrangement inequality whose proof can essentially be found in \cite{Kawohl1985Rearrangements}.
\begin{thm}[Riesz rearrangement inequality on a periodic domain \cite{Kawohl1985Rearrangements}]
  Let $K$ be a function on the periodic domain $\Omega=[-\frac{1}{2},\frac{1}{2}]$. If $K$ is even and strictly decreasing on $(0,\frac{1}{2})$, then for any pair $f,g$ of nonnegative measurable functions, we have
  \begin{equation}
    \int_{\Omega}\int_{\Omega}f(x)g(y)K(x-y)\diff{x}\diff{y}\leq \int_{\Omega}\int_{\Omega}f^*(x)g^*(y)K(x-y)\diff{x}\diff{y}.
  \end{equation}
  where the equality (with a finite and nonzero value of the integral) occurs only if there exists a translation $T$ such that $f=f^*\circ T$ and $g=g^*\circ T$ almost everywhere.
\end{thm}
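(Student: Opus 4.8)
The plan is to reduce the full statement to an inequality between indicator functions via the layer-cake (bathtub) representation, and then to prove the resulting set inequality on the circle by two-point symmetrization (polarization). First I would normalize the kernel: since $f,g\ge 0$ and $\int_\Omega f=\int_\Omega f^*$, adding a constant to $K$ changes the double integral only by a rearrangement-invariant amount, so I may assume $K\ge 0$ (replacing $K$ by $K-K(\tfrac12)$, which is finite because $K$ is decreasing on $(0,\tfrac12)$). Writing $f(x)=\int_0^\infty \sone_{\{f>s\}}(x)\diff s$ and likewise for $g$, and using Tonelli (all integrands nonnegative), the functional $\mathcal I(f,g):=\int_\Omega\int_\Omega f(x)g(y)K(x-y)\diff x\diff y$ splits as $\mathcal I(f,g)=\int_0^\infty\!\!\int_0^\infty \mathcal I(\sone_{A_s},\sone_{B_t})\diff s\diff t$, where $A_s=\{f>s\}$ and $B_t=\{g>t\}$. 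Since the super-level sets of $f^*$ are exactly $A_s^*$ (and similarly for $g^*$), it suffices to prove $\mathcal I(\sone_A,\sone_B)\le \mathcal I(\sone_{A^*},\sone_{B^*})$ for all measurable $A,B\subset\Omega$.

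Next I would peel the kernel by the same device. Because $K$ is even and strictly decreasing on $(0,\tfrac12)$, each super-level set $\{z:K(z)>\tau\}$ is a centered interval $I_\tau=(-r_\tau,r_\tau)$, so $K$ equals its own symmetric decreasing rearrangement and $K(x-y)=\int_0^\infty \sone_{I_\tau}(x-y)\diff\tau$. Hence the set inequality reduces to showing, for every centered interval $I=(-r,r)$ with $0<r\le\tfrac12$,
\begin{equation*}
\Phi_I(A,B):=\int_A \abs{B\cap(x+I)}\diff x \;\le\; \Phi_I(A^*,B^*),
\end{equation*}
where $x+I$ is understood modulo $1$; this is precisely the three-indicator Riesz inequality on the circle with a symmetric-interval kernel.

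To establish the set inequality I would use polarization (two-point rearrangement). Fix an axis, i.e.\ a pair of antipodal points splitting $\Omega$ into two half-circles $H,H'$, and let $\xi\mapsto\xi^\sharp$ be the reflection across it. The polarization $A^H$ keeps $A\cap H$, adjoins the reflected copy of $A\cap H'$ on the favored side, and symmetrizes the overlap. The key lemma is that $\Phi_I(A,B)\le \Phi_I(A^H,B^H)$, which follows from the elementary four-point inequality $\sone_I(x-y)+\sone_I(x^\sharp-y^\sharp)\ge \sone_I(x-y^\sharp)+\sone_I(x^\sharp-y)$ enjoyed by the symmetric monotone interval kernel. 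One then invokes the classical fact (Baernstein--Taylor, Brock--Solynin) that iterated polarizations along a suitable dense sequence of axes drive $A\to A^*$ and $B\to B^*$ in measure; passing to the limit using continuity of $\Phi_I$ in the symmetric-difference metric yields $\Phi_I(A,B)\le\Phi_I(A^*,B^*)$, and integrating back over $\tau,s,t$ gives the inequality. This polarization route is the content that can be extracted from \cite{Kawohl1985Rearrangements}.

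The hard part is the equality characterization, and I expect it to be the main obstacle. Assuming $\mathcal I(f,g)=\mathcal I(f^*,g^*)$ is finite and nonzero, I would trace equality down through the two layer-cake integrals: for almost every pair of thresholds the corresponding sets must saturate $\Phi_I(A,B)=\Phi_I(A^*,B^*)$ for the relevant intervals $I=I_\tau$. The strict monotonicity of $K$ on $(0,\tfrac12)$ is essential here, since it guarantees that a genuine range of radii $r_\tau$ occurs, so that saturation across all of them forces each level set $A_s$, $B_t$ to be, up to a null set, a centered interval, and moreover forces the \emph{same} center for every level of $f$ and of $g$ simultaneously. Translating by that common center $T$ then gives $f=f^*\circ T$ and $g=g^*\circ T$ almost everywhere. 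The delicate points are to show the polarization inequality is strict unless a set is already symmetric about the axis, and to promote ``symmetric about every axis in a dense family'' into ``a centered interval with one common center shared by $f$ and $g$''; this simultaneity is exactly where the coupling of $f$ and $g$ through the single kernel $K$, together with its strict decrease, must be used.
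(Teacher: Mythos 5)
First, a point of comparison: the paper does not prove this theorem at all --- it is stated in Appendix~A purely as a quoted result, with the proof deferred to Kawohl's book \cite{Kawohl1985Rearrangements}. So your proposal cannot be matched against an in-paper argument, only against the standard literature route, which it follows: layer-cake reduction to indicator functions, level-set decomposition of the kernel into centered intervals (valid precisely because $K$ is even and strictly decreasing on $(0,\tfrac12)$, so $\{K>\tau\}=(-r_\tau,r_\tau)$ after subtracting the rearrangement-invariant constant $K(\tfrac12)\int f\int g$), and two-point polarization. The inequality half of your argument is essentially complete: for the reflection $\xi\mapsto\xi^\sharp$ across an axis one has $\sone_I(x^\sharp-y^\sharp)=\sone_I(x-y)$ and $\sone_I(x-y^\sharp)=\sone_I(x^\sharp-y)$, so your four-point inequality reduces to the fact that two points in the same closed half-circle are at least as close to each other as to each other's reflections, and the passage to $f^*,g^*$ via a universal sequence of polarizations (Brock--Solynin, Van Schaftingen) together with $L^1$-continuity of $\Phi_I$ is standard. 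This is a legitimate proof of the inequality that the paper itself never supplies.

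The genuine gap is in the equality characterization, and it is sharper than the hedged difficulty you describe: the strictness lemma you invoke --- ``the polarization inequality is strict unless a set is already symmetric about the axis'' --- is false at the level of a single layer $\sone_I$. Two concrete failures: (i) if $A=B$ is a small interval lying strictly inside the \emph{unfavored} half, then $A^H=A^\sharp$ and $\Phi_I(A^H,B^H)=\Phi_I(A,B)$ by reflection invariance, so equality holds for a wildly asymmetric set (the correct dichotomy is ``jointly polarized \emph{or} jointly anti-polarized'' for the pair $(A,B)$, not symmetry); (ii) for $r_\tau$ close to $\tfrac12$ and sets of small diameter, $\sone_{I_\tau}(x-y)\equiv 1$ on $A\times B$, so $\Phi_{I_\tau}(A,B)=|A||B|$ with equality regardless of any symmetry. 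Consequently, saturation layer-by-layer does not force the structure you need without further work; the standard repair is to run the polarization rigidity argument on the \emph{full} strictly decreasing kernel $K$ rather than on its layers (as in the equality analysis of Burchard--Hajaiej on spheres, or the analogue of Lieb--Loss Theorem~3.9 on $\sR^n$), where one polarization equality does force the joint (anti-)polarization alternative, and then a connectedness argument over axes yields the common translation $T$ for all level sets of $f$ and $g$ simultaneously. There is also a secondary technical hole: your tracing of equality gives $\Phi_{I_\tau}(A_s,B_t)=\Phi_{I_\tau}(A_s^*,B_t^*)$ only for a.e.\ $\tau$, and if $K$ is merely strictly decreasing (possibly discontinuous), the pushforward of $\diff{\tau}$ under $\tau\mapsto r_\tau$ can be singular, so ``a.e.\ $\tau$'' does not directly yield ``a.e.\ radius $r$''; this is fixable by noting that $r\mapsto\Phi_{(-r,r)}(A^*,B^*)-\Phi_{(-r,r)}(A,B)$ is continuous, nonnegative, and integrates to zero against a measure whose support is all of $[0,\tfrac12]$. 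In summary: the inequality is proved, but the equality clause of the theorem as stated is not, and the specific lemma your plan rests on would fail as formulated.
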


\section{Euler--Maclaurin expansion for singular integrals}\label{sec..EulerMaclaurin}
We rephrase the following theorem from \cite{Sidi1988Quadrature}. As mentioned in \cite{Sidi1988Quadrature}, this theorem is true for any $m<1$, although they were originally proved for $0<m<1$ by Navot \cite{Navot1961extension}.
\begin{thm}[Essentially rephrased from Theorem 2 of \cite{Sidi1988Quadrature}]\label{thm..SigularEulerMaclaurin}
    Let $g(h)$ be a function in $C^{2p}(\overline{\sR^+})\cap W^{2p,1}(\overline{\sR^+})$ with $p\geq 1$. Let $a>0$, $h_j=ja$, $j=0,1,\cdots$, and $G(h)=h^{-m}g(h)$, $m<1$. Then as $a\rightarrow 0$, we have the asymptotic expansion
    \begin{align*}
        &\int_{0}^{+\infty}G(h)\diff{h}
        =a \sum_{j=1}^{\infty}G(h_j) 
        -\sum_{r=0}^{2p-1}\frac{\zeta(m-r)}{r!}g^{(r)}(0)a^{r-m+1}+R_{2p},\\
        &R_{2p}
        = a^{2p}\int_{0}^{+\infty}\frac{\bar{B}_{2p}(h/a)-B_{2p}}{(2p)!}\frac{\D^{2p}}{\D h^{2p}}
        \left\{\int_{0}^{h}\frac{g^{(2p)}(h')}{(2p-1)!}h^{-m}(h-h')^{2p-1}\diff{h'}
        \right\}\diff{h},
    \end{align*}
    where $B_{2p}$ is the Bernoulli number and $\bar{B}_{2p}(\xi)$ is the periodic Bernoullian function.
\end{thm}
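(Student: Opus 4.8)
The plan is to reduce this singular Euler--Maclaurin formula to two classical ingredients: the ordinary Euler--Maclaurin summation formula applied to a \emph{regularized} integrand, and the analytic continuation of the Riemann zeta function, which supplies the corrections generated at the singular endpoint $h=0$. The $\zeta(m-r)$ factors are the signature of the algebraic singularity $h^{-m}$ and do not appear in the smooth theory. Since $g\in C^{2p}(\overline{\sR^+})$, Taylor's theorem with integral remainder gives $g(h)=\sum_{r=0}^{2p-1}\frac{g^{(r)}(0)}{r!}h^r+\frac{1}{(2p-1)!}\int_0^h g^{(2p)}(h')(h-h')^{2p-1}\diff{h'}$. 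Multiplying by $h^{-m}$ decomposes $G$ into the singular monomials $\frac{g^{(r)}(0)}{r!}h^{r-m}$ and a remainder $\tilde G(h):=h^{-m}\big(g(h)-\sum_{r}\frac{g^{(r)}(0)}{r!}h^r\big)$ that vanishes like $h^{2p-m}$ as $h\to 0$. The point of this splitting is that $\tilde G$ is regular enough at the origin for the classical theory, while each monomial is handled exactly by zeta regularization.

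First I would establish the monomial building block: for a real exponent $s=r-m>-1$ and $N\in\sN$, I claim $a\sum_{j=1}^{N}(ja)^{s}-\int_0^{Na}h^{s}\diff{h}=\zeta(-s)\,a^{s+1}+(\text{corrections at the upper endpoint }Na)$. Writing $a\sum_{j=1}^N(ja)^s=a^{s+1}\sum_{j=1}^N j^s$ and invoking the standard asymptotic expansion of the partial sums $\sum_{j=1}^N j^s=\frac{N^{s+1}}{s+1}+\zeta(-s)+\tfrac12 N^s+\cdots$, in which the analytically continued value $\zeta(-s)$ appears precisely as the $N$-independent constant, the integral $\int_0^{Na}h^s\diff{h}=a^{s+1}\frac{N^{s+1}}{s+1}$ cancels the leading power, leaving $\zeta(-s)a^{s+1}$ as the contribution attributable to the lower (singular) endpoint. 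With $s=r-m$ this is $\zeta(m-r)a^{r-m+1}$, which summed against $\frac{g^{(r)}(0)}{r!}$ over $0\le r\le 2p-1$ reproduces the middle term of the asserted expansion. The corrections at the upper endpoint $Na$ are smooth Euler--Maclaurin boundary terms that will combine with the far-field analysis of $\tilde G$.

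Second I would apply the ordinary Euler--Maclaurin formula (the version with the periodic Bernoullian function $\bar B_{2p}$) to the regular remainder $\tilde G$. Because $\tilde G$ vanishes to order $2p-m$ at $h=0$, all of its Euler--Maclaurin boundary contributions at the origin, up to the order retained, are zero; the only surviving term is the integral remainder, which after writing $\tilde G(h)=h^{-m}\cdot\frac{1}{(2p-1)!}\int_0^h g^{(2p)}(h')(h-h')^{2p-1}\diff{h'}$ is exactly the stated $R_{2p}$. The boundedness of $\bar B_{2p}-B_{2p}$ together with $g\in W^{2p,1}(\overline{\sR^+})$ guarantees that the differentiated integrand is absolutely integrable, so $R_{2p}=O(a^{2p})$ and the remainder is genuinely of higher order than every displayed correction, since $2p-m+1>r-m+1$ for all $r\le 2p-1$.

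The main obstacle will be the careful bookkeeping of the upper endpoint at $Na$ as $N\to\infty$: the monomial identity and the Euler--Maclaurin expansion of $\tilde G$ each produce boundary contributions at $h=Na$, and one must verify that these either cancel against one another or tend to the correct limit, so that the sum $a\sum_{j=1}^\infty G(h_j)$ and the convergent integral $\int_0^\infty G(h)\diff{h}$ are compared consistently. This is where the decay encoded in $g\in W^{2p,1}(\overline{\sR^+})$ is essential, and where the analytic continuation argument for $\sum_{j=1}^N j^s$ must be made rigorous rather than formal. Since the statement is a rephrasing of Theorem~2 of \cite{Sidi1988Quadrature}, one may alternatively cite their result directly; the sketch above indicates how the $\zeta(m-r)$ corrections and the $\bar B_{2p}$ remainder arise from the singular endpoint and the smooth tail, respectively.
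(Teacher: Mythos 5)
The paper offers no proof of this statement to compare against: Appendix B simply rephrases Theorem 2 of \cite{Sidi1988Quadrature} (noting it holds for all $m<1$, the case $0<m<1$ going back to Navot \cite{Navot1961extension}), so the ``paper's proof'' is a citation. Your sketch correctly reconstructs the standard argument behind that cited result, and the structure is right: Taylor-split $g$ at the singular endpoint, evaluate the monomial pieces $h^{r-m}$ by zeta regularization (the analytically continued constant $\zeta(-s)$ in the expansion of $\sum_{j=1}^{N}j^{s}$, giving $\zeta(m-r)a^{r-m+1}$ with $s=r-m$), and apply classical Euler--Maclaurin to the Taylor-remainder piece $\tilde{G}$, whose derivatives of order $k\le 2p-1$ vanish at $0$ because $\tilde{G}(h)=O(h^{2p-m})$ and $2p-m>2p-1$ for $m<1$, so that only the $\bar{B}_{2p}$ integral remainder survives and it is exactly the stated $R_{2p}$.

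Two points need more care than the sketch gives them, both of which you at least partially flag. First, $\tilde{G}$ cannot be fed to Euler--Maclaurin on all of $(0,\infty)$: the subtracted Taylor polynomial makes $\tilde{G}(h)\sim -\frac{g^{(2p-1)}(0)}{(2p-1)!}h^{2p-1-m}$ grow at infinity, so $a\sum_{j}\tilde{G}(h_j)$ and $\int_0^\infty \tilde{G}(h)\diff{h}$ diverge \emph{separately}; the decomposition is only legitimate on $[0,Na]$, using the exact (remainder-form, not asymptotic) Euler--Maclaurin identity for the monomials, with the upper-endpoint contributions of the monomial and remainder pieces cancelling identically before $N\to\infty$. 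For the full $G$, the hypothesis $g\in W^{2p,1}(\overline{\sR^+})$ is what kills the boundary terms at infinity, since $g^{(k)},g^{(k+1)}\in L^1$ forces $g^{(k)}(h)\to 0$ for $k\le 2p-1$. Second, for $-1<m\le 0$ the absolute convergence of $R_{2p}$ at infinity does not follow from $g^{(2p)}\in L^1$ alone: the Leibniz term $h^{-m}g^{(2p)}(h)$ carries a growing weight, and the differentiated monomial part decays only like $h^{-1-m}$, so one must either exploit the mean-zero oscillation of $\bar{B}_{2p}(h/a)-B_{2p}$ or assume extra decay --- this is precisely the extension beyond Navot's $0<m<1$ that \cite{Sidi1988Quadrature} remarks on, and in the paper's actual application (the proof of Proposition 1) the analyticity and fast decay supplied by Assumption 1 make $R_2$ manifestly finite. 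Your closing option --- citing Theorem 2 of \cite{Sidi1988Quadrature} directly --- is exactly what the paper does, and for the paper's purposes is the correct move.
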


\section*{Acknowledgments}
This work was supported by the Hong Kong Research Grants Council General Research Fund 16313316.

\bibliographystyle{spmpsci}

\end{document}